\newtheorem{theorem}{Theorem}[section]
\newtheorem{proposition}[theorem]{Proposition}
\newtheorem{lemma}[theorem]{Lemma}
\theoremstyle{definition}
\newtheorem{remark}[theorem]{Remark}
\numberwithin{equation}{section}
\begin{document}
\baselineskip=15.5pt

\title[Monodromy map for logarithmic differential systems]{On the monodromy 
map for the logarithmic differential systems}

\author[M. Aprodu]{Marian Aprodu}

\address{Faculty of Mathematics and Computer Science, University of Bucharesy \& ``Simion Stoilow'' Institute of Mathematics of the Romanian Academy,P.O. Box 1-764, 
014700 Bucharest, Romania}

\email{marian.aprodu@fmi.unibuc.ro \& marian.aprodu@imar.ro}

\author[I. Biswas]{Indranil Biswas}

\address{School of Mathematics, Tata Institute of Fundamental
Research, Homi Bhabha Road, Mumbai 400005, India}

\email{indranil@math.tifr.res.in}

\author[S. Dumitrescu]{Sorin Dumitrescu}

\address{Universit\'e C\^ote d'Azur, CNRS, LJAD, France}

\email{dumitres@unice.fr}

\author[S. Heller]{Sebastian Heller}

\address{Institut f\"ur Differentialgeometrie, Leibniz-Universit\"at Hannover, Germany}

\email{sheller@math.uni-hannover.de}

\subjclass[2010]{34M15, 34M56, 14H60, 53C05}

\keywords{Logarithmic connection, logarithmic differential system,
monodromy map, character variety}

\date{}

\begin{abstract}
We study the monodromy map for logarithmic $\mathfrak g$-differential systems over an oriented surface $S_0$ of
genus $g$, with $\mathfrak g$ being
the Lie algebra of a complex reductive affine algebraic group $G$. These logarithmic 
$\mathfrak g$-differential systems are triples of the form $(X,\, D,\, \Phi)$, where $(X,\, D)\, \in\, {\mathcal 
T}_{g,d}$ is an element of the Teichm\"uller space of complex structures on $S_0$ with $d\, \geq\,1$
ordered marked points $D\, \subset\, S_0\,=\, X$ and 
$\Phi$ is a logarithmic connection on the trivial holomorphic principal $G$-bundle $X \times G$ over $X$ whose polar part 
is contained in the divisor $D$. We prove that the monodromy
map from the space of logarithmic $\mathfrak g$-differential systems to the character variety of 
$G$-representations of the fundamental group of $S_0\setminus D$ is an 
immersion at the generic point, in the following two cases:
\begin{enumerate}
\item $g\, \geq\, 2$, $d\, \geq\, 1$, and $\dim_{\mathbb C}G \, \geq\, d+2$;

\item $g\,=\,1$ and $\dim_{\mathbb C}G \, \geq\, d$.
\end{enumerate}
The above monodromy map is nowhere an immersion in the following two cases:
\begin{enumerate}
\item $g\,=\,0$ and $d\, \geq\, 4$;

\item $g\,\geq\,1$ and $\dim_{\mathbb C}G \, <\, \frac{d+3g-3}{g}$.
\end{enumerate}
This extends to the logarithmic case the main results in \cite{CDHL}, \cite{BD} dealing with 
nonsingular holomorphic $\mathfrak g$-differential systems (which corresponds to the case of $d\,=\,0$).
\end{abstract}

\maketitle

\tableofcontents

\section{Introduction}

The study of the Riemann-Hilbert mapping, which associates to a flat (algebraic or holomorphic) connection its 
monodromy morphism from the fundamental group is a classical topic in algebraic and analytical geometry (see, for 
instance, \cite{De}, \cite{Ka} and references therein).

We recall the set-up and results of \cite{CDHL} and \cite{BD}, the predecessors of this paper.
Let $G$ be a connected reductive affine algebraic group defined over $\mathbb C$,
with $\dim G \, >\, 0$, and let $\mathfrak g$ be the
Lie algebra of $G$. A $\mathfrak g$--differential system is a pair of the
form $(X,\, \Phi)$, where $X$ is a complex structure on a compact oriented smooth surface $S_0$
of genus $g$, and $\Phi$ is a
holomorphic connection on the trivial holomorphic principal $G$--bundle $X \times G$ over the
Riemann surface $X$. A $\mathfrak g$--differential system $(X,\, \Phi)$ is called irreducible if $\Phi$
is not induced by a holomorphic connection on $X\times P$ for some proper parabolic subgroup $P$ of $G$. 
Since any holomorphic connection on a Riemann surface is flat, associating the monodromy
representation to a holomorphic connection we obtain a map from the space of irreducible
$\mathfrak g$--differential systems to the irreducible $G$-character variety ${\rm Hom}(\pi_1(S_0),\, G)^{\rm ir}/G$.
This monodromy map is actually holomorphic.

The main result of \cite{CDHL} says that, if $g=2$, this Riemann-Hilbert monodromy map is a local diffeomorphism 
from the space of irreducible $\mathfrak g$--differential systems into the irreducible $G$-character variety, for 
$G\,=\, \text{SL}(2, {\mathbb C})$. Being inspired by \cite{CDHL}, in \cite{BD} it was shown that, for all $g\, \geq\, 
2$, the above monodromy map is an immersion on an open dense subset of the space of irreducible $\mathfrak 
g$--differential systems, for all reductive groups $G$ with $\dim_{\mathbb C} G\, \geq\, 3$.

Our aim here is to study the Riemann-Hilbert monodromy mapping for logarithmic $\mathfrak g$--differential 
systems, where $\mathfrak g$ is as above. These logarithmic $\mathfrak g$--differential systems are defined by 
triples of the form $(X,\, D,\, \Phi)$, where $(X,\, D)\, \in\, {\mathcal T}_{g,d}$ is an element of the 
Teichm\"uller space of complex structures on $S_0$ with $d$ ordered marked points $D\, \subset\, S_0\,=\, X$ (see 
Section \ref{log connect}), and $\Phi$ is a logarithmic connection on the trivial holomorphic principal $G$-bundle 
$X \times G$ over $X$ whose polar part is contained in the divisor $D$.

We prove the following (see Theorem \ref{thm1}):

\begin{theorem}\label{thm-i}
Assume that $3g-3+d\, >\, 0$ and $d\, \geq\, 1$.
The Riemann-Hilbert monodromy mapping from the above space of irreducible logarithmic $\mathfrak
g$--differential systems to the character variety of irreducible $G$-representations of the fundamental
group of $S_0\setminus D$ is an immersion at the generic point in the following two cases:
\begin{enumerate}
\item $g\, \geq\, 2$ and $\dim_{\mathbb C} G\, \geq \, d+2$;

\item $g\, =\, 1$ and $\dim_{\mathbb C} G\, \geq \, d$.
\end{enumerate}

The Riemann-Hilbert monodromy mapping from the above space of irreducible logarithmic $\mathfrak
g$--differential systems to the character variety of irreducible $G$-representations of the fundamental
group of $S_0\setminus D$ is nowhere an immersion in the following two cases:
\begin{enumerate}
\item $g\, =\, 0$;

\item $g\, \geq\, 1$ and $\dim_{\mathbb C}G \, <\, \frac{d+3g-3}{g}$ (in particular,
when $g\,=\,1$ and $\dim_{\mathbb C}G \, <\, d$).
\end{enumerate}
\end{theorem}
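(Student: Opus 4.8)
The plan is to linearize the monodromy map and reduce its injectivity to a single cup-product (multiplication) statement on $X$. Since any logarithmic connection on a Riemann surface is automatically flat, the monodromy map is the Riemann--Hilbert correspondence restricted to the locus of logarithmic connections carried by the trivial bundle, and Riemann--Hilbert is a local biholomorphism onto the character variety. At a point $(X,D,\Phi)$ with $\Phi=d+A$, where $A\in H^0(X,\Omega^1_X(\log D)\otimes\mathfrak g)$, the tangent space to the character variety at the associated representation $\rho$ is the first hypercohomology $\mathbb H^1$ of the logarithmic de Rham complex $\mathfrak g\otimes\mathcal O_X\xrightarrow{\nabla}\mathfrak g\otimes\Omega^1_X(\log D)$, and the hypercohomology spectral sequence presents it as an extension of $\ker\big(H^1(\mathfrak g\otimes\mathcal O_X)\xrightarrow{\nabla}H^1(\mathfrak g\otimes\Omega^1_X(\log D))\big)$ by $\operatorname{coker}\big(\mathfrak g\xrightarrow{\nabla}H^0(\mathfrak g\otimes\Omega^1_X(\log D))\big)$. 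The tangent space of the domain splits accordingly: deformations of $\Phi$ with $(X,D)$ fixed, taken modulo the constant gauge group $G$, are exactly this cokernel and map isomorphically onto the sub-object, whereas the deformations of the marked curve $H^1(X,T_X(-D))$, of dimension $3g-3+d$, map into the quotient through the cup-product
$$\Psi_A\colon H^1(X,T_X(-D))\longrightarrow H^1(X,\mathcal O_X)\otimes\mathfrak g,\qquad \mu\longmapsto\mu\cup A,$$
built from the contraction $T_X(-D)\otimes\Omega^1_X(\log D)\to\mathcal O_X$. A diagram chase then shows that the monodromy map is an immersion at $(X,D,\Phi)$ if and only if $\Psi_A$ is injective.

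Granting this equivalence, the ``nowhere an immersion'' assertions become dimension counts for $\Psi_A$. When $g=0$ one has $H^1(X,\mathcal O_X)=0$, so $\Psi_A\equiv 0$ while its source has positive dimension $d-3\ge 1$ (as $3g-3+d>0$ forces $d\ge 4$); hence the whole Teichm\"uller direction lies in the kernel and the map is nowhere an immersion. For $g\ge 1$ the source of $\Psi_A$ has dimension $3g-3+d$ and the target $H^1(X,\mathcal O_X)\otimes\mathfrak g$ has dimension $g\dim_{\mathbb C}G$, so whenever $g\dim_{\mathbb C}G<3g-3+d$, that is $\dim_{\mathbb C}G<\tfrac{d+3g-3}{g}$, the map $\Psi_A$ has a nonzero kernel for every $A$, and the monodromy map is nowhere an immersion.

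For the immersion statements I would dualize: by Serre duality $\Psi_A$ is injective if and only if its transpose, the multiplication map
$$H^0(X,K_X)\otimes U_A\longrightarrow H^0(X,K_X^{\otimes 2}(D)),$$
is surjective, where $U_A\subseteq H^0(X,\Omega^1_X(\log D))=H^0(X,K_X(D))$ is the image of the linear map $\mathfrak g^{\ast}\to H^0(X,K_X(D))$ determined by $A$; for generic $A$ this $U_A$ is a generic subspace of dimension $\min(\dim_{\mathbb C}G,\,g-1+d)$. The case $g=1$ is immediate, since $K_X=\mathcal O_X$ makes the multiplication map the inclusion $U_A\hookrightarrow H^0(\mathcal O_X(D))$, which is onto exactly when $\dim U_A=d$, i.e. when $\dim_{\mathbb C}G\ge d$. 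For $g\ge 2$ it suffices, by upper semicontinuity of the rank and connectedness of the domain, to exhibit a single marked curve $(X,D)$ together with one subspace $U$ of the appropriate dimension for which the multiplication map is surjective; injectivity of $\Psi_A$ then holds on a dense open subset, which is immersion at the generic point.

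The main obstacle is precisely this surjectivity when $U$ is a proper subspace of $H^0(K_X(D))$, i.e. in the regime $d+2\le\dim_{\mathbb C}G<g-1+d$ that occurs for $g\ge 4$. When $\dim_{\mathbb C}G\ge g-1+d$ one may take $U=H^0(K_X(D))$ and invoke the classical surjectivity of $H^0(K_X)\otimes H^0(K_X(D))\to H^0(K_X^{\otimes 2}(D))$ for a general curve; but for smaller $\dim_{\mathbb C}G$ the product structure of the subspaces $H^1(\mathcal O_X)\otimes\ker E$ destroys the naive transversality with $\operatorname{im}\Psi_A$, so the bound $\dim_{\mathbb C}G\ge d+2$ cannot come from dimension counting alone and must be read off an explicit geometric model. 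I would specialize $(X,D)$ to a convenient configuration and, using the base-point-free pencil trick together with a careful choice of $d+2$ sections of $K_X(D)$ adapted to the divisor $D$, control the cokernel of the multiplication map (which a priori is governed by $H^1(X,\mathcal O_X(-D))$) and verify that these sections already saturate $H^0(K_X^{\otimes 2}(D))$; extracting the sharp constant $d+2$ is the delicate point of the whole argument.
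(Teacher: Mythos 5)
Your reduction is the same as the paper's: the nowhere-immersion cases are proved by the identical dimension counts, and your criterion ``immersion at $(X,D,\Phi)$ iff the cup product $\Psi_A\colon H^1(X,T_X(-D))\to H^1(X,\mathcal O_X)\otimes\mathfrak g$ is injective'' is exactly the paper's reduction (there one writes $\Phi=\Phi_0+\delta$ and the map is $\widehat{\delta}_*$, with the criterion appearing as \eqref{e26b} and \eqref{q2}), as is the $g=1$ computation. The genuine gap is precisely the step you defer to the last paragraph: for $g\ge 2$ you never prove the existence of a subspace $W\subset H^0(X,K_X(D))$ with $\dim W\le d+2$ such that $H^0(X,K_X)\otimes W\to H^0(X,K_X^{\otimes 2}(D))$ is surjective. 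Since the sharp bound $\dim_{\mathbb C}G\ge d+2$ is what the theorem asserts, this is the heart of the proof, and ``specialize to a convenient configuration and use the base-point-free pencil trick'' is a plan, not an argument. The paper's Lemma \ref{lem4} settles it by a splitting you do not consider: one should not look for a generic $(d+2)$-plane inside $H^0(K_X(D))$, but rather take $W=W_0\oplus U$, where $W_0\subset H^0(X,K_X)$ is a \emph{general $3$-dimensional subspace of holomorphic forms}, which by a theorem of Lazarsfeld (quoted from \cite{Gi}) already satisfies $H^0(K_X)\otimes W_0\twoheadrightarrow H^0(K_X^{\otimes 2})$ on any non-hyperelliptic curve, and $U$ is any $(d-1)$-dimensional complement of $H^0(K_X)$ in $H^0(K_X(D))$; the surjectivity of the full multiplication map $J$ (proved via Green's Theorem (4.e.1), whose hypothesis $h^1(\mathcal O_X(D))\le g-2$ is checked with Clifford's theorem) then shows $H^0(K_X)\otimes U$ covers $H^0(K_X^{\otimes 2}(D))/H^0(K_X^{\otimes 2})$. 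This yields the constant $3+(d-1)=d+2$ uniformly, so your ``hard regime $g\ge 4$'' requires nothing extra; genus $2$ is handled separately by an Euler-sequence argument (Lemma \ref{lem5}), where $h^0(K_X(D))=d+1\le d+2$.

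A second, concrete failure: your argument cannot reach case (1) when $d=1$. By the residue theorem, $H^0(X,K_X(D))=H^0(X,K_X)$ when $\deg D=1$, so for $g\ge 2$ the image of any multiplication map $H^0(K_X)\otimes U$ with $U\subset H^0(K_X(D))$ lies in $H^0(K_X^{\otimes 2})\subsetneq H^0(K_X^{\otimes 2}(D))$, and the required surjectivity fails identically, for every curve and every subspace. The paper is aware of this: its Lemmas \ref{lem4} and \ref{lem5} assume $d>1$, and the case $g\ge 2$, $d=1$ is treated by an entirely separate reduction to the $d=0$ theorem of \cite{BD} (Remark \ref{rem-d1}). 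So even after the key lemma is supplied, your proposal as written proves the immersion statement only for $d\ge 2$.
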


We note that Theorem \ref{thm-i} gives a complete answer only when $g\,=\, 0$ or $g\,=\,1$. For given
$g\, \geq\, 2$ and $G$, there are finitely many cases of $d$ that are not addressed in Theorem \ref{thm-i}.
When $g\,=\,1$ and $d\,=\, 0$, from the first part of Theorem \ref{thm-i} it follows that the
monodromy mapping from the space of irreducible logarithmic $\mathfrak
g$--differential systems is an immersion at the generic point; see Remark \ref{gd0}.

Theorem \ref{thm-i}, extends to the class of logarithmic $\mathfrak g$-differential systems, the main result in 
\cite{BD} which deals with the nonsingular holomorphic $\mathfrak g$-differential systems (corresponding to the 
case $d\,=\,0$). Notice that the hypothesis $3g-3+d\, >\, 0$ in Theorem \ref{thm-i} implies that the above 
Teichm\"uller space ${\mathcal T}_{g,d}$ has positive dimension.

Given a reductive complex affine algebraic group $G_0$, by setting $G$ to be the product group
$G^m_0$, $m\, \geq\, 1$, we can make its dimension arbitrarily large.

The proof of Theorem \ref{thm-i} is based on a transversality result in the moduli space $\mathcal B_{G}$ of
quadruples of the form $(X,\, D,\, E_G,\, \Phi)$, where
\begin{itemize}
\item $(X,\, D)\, \in\, {\mathcal T}_{g,d}$,

\item $E_G$ is a holomorphic principal $G$--bundle on $X$ such that $E_G$ is topologically trivial,
and

\item $\Phi$ is a logarithmic connection on $E_G$ whose
polar part is contained in $D$.
\end{itemize}

A key ingredient of this transversality condition is proved in Lemma \ref{lem4} which is an adaptation to the 
logarithmic case of Theorem 1.1 in \cite{Gi} (where its proof is attributed to R. Lazarsfeld).

The article is organized as follows. Sections \ref{Atiyah bundle} and \ref{log connect} are preparatory: they 
introduce the concept of logarithmic connections on holomorphic principal bundles, the above moduli space $\mathcal 
B_{G}$ of quadruples $(X,\, D,\, E_G,\, \Phi)$ and the $G$-character variety. We describe the infinitesimal 
deformation space of quadruples (the tangent space of $\mathcal B_{G}$) as the first hypercohomology group of a 
certain $2$-term complex (see Proposition \ref{prop1}). Section \ref{monodromy map} is devoted to the 
proof of the main result (Theorem \ref{thm1}) and deals with the transversality, in the tangent space of $\mathcal 
B_{G}$, between the isomonodromy foliation and the subspace of logarithmic $\mathfrak g$--differential systems. 
This transversality condition, which is equivalent to the monodromy map being an immersion on the space 
logarithmic of $\mathfrak g$--differential systems, is proved by combining a criteria given in Lemma \ref{lem3}
(also Proposition \ref{prop3}), with Lemma \ref{lem4} (dealing with the case $g \geq 3$) and Lemma \ref{lem5} 
(dealing with the case of $g\,=\,2$).

\section{The logarithmic Atiyah bundle} \label{Atiyah bundle} 

Let $X$ be a compact connected Riemann surface. Let
\begin{equation}\label{e1}
D\, :=\, \{x_1,\, \cdots,\, x_d\}\, \subset\, X
\end{equation}
be $d$ distinct points, with $d\, \geq\, 2$. For notational convenience, the divisor
$x_1+\ldots+ x_d$ of degree $d$ on $X$ will also be denoted by $D$.
For a holomorphic vector bundle $V$ on $X$, the holomorphic vector bundles
$V\otimes {\mathcal O}_X(D)$ and $V\otimes {\mathcal O}_X(-D)$ will be denoted
by $V(D)$ and $V(-D)$ respectively. The holomorphic tangent and cotangent bundles
of $X$ will be denoted by $TX$ and $K_X$ respectively.

Let $G$ be a connected complex affine algebraic group with $\dim G \, >\, 0$. The Lie algebra of $G$ will be
denoted by $\mathfrak g$. Let
\begin{equation}\label{e2}
p\, :\, E_G\, \longrightarrow\, X
\end{equation}
be a holomorphic principal $G$--bundle over $X$. The action of $G$ on $E_G$ produces
an action of $G$ on the holomorphic tangent bundle $TE_G$ of $E_G$. The quotient
\begin{equation}\label{e3}
\text{At}(E_G)\, :=\, (TE_G)/G \, \longrightarrow\, X
\end{equation}
is the Atiyah bundle for $E_G$ \cite{At}. Let $dp\, :\, TE_G\,\longrightarrow\, p^*TX$
be the differential of the map $p$ in \eqref{e2}. Let
\begin{equation}\label{e3p}
\text{ad}(E_G)\, :=\, \text{kernel}(dp)/G \, \subset\, (TE_G)/G
\end{equation}
be the adjoint bundle for $E_G$. Note that this holomorphic vector bundle $\text{kernel}(dp)$ is
identified with the trivial holomorphic vector bundle $E_G\times{\mathfrak g}\,\longrightarrow\,
E_G$ using the action of $G$ on $E_G$. Hence $\text{ad}(E_G)$ coincides with the vector
bundle $E_G\times^G{\mathfrak g}\,\longrightarrow\, X$ associated to $E_G$ for
the adjoint action of $G$ on ${\mathfrak g}$.

Thus we have a short exact sequence of holomorphic vector bundles on $X$
\begin{equation}\label{e4}
0\, \longrightarrow\, \text{ad}(E_G) \, \longrightarrow\,\text{At}(E_G)
\, \stackrel{d'p}{\longrightarrow}\, TX \, \longrightarrow\, 0\, ,
\end{equation}
where $\text{At}(E_G)$ is defined in \eqref{e3}, and the projection $d'p$ is induced by
$dp$; the sequence in \eqref{e4} is known as the Atiyah exact sequence. Define
\begin{equation}\label{e5}
\text{At}(E_G)(-\log D)\, :=\, (d'p)^{-1}(TX(-D))\, \subset\, \text{At}(E_G)\, ,
\end{equation}
where $d'p$ is the homomorphism in \eqref{e4}. So, from \eqref{e4} we have the
\textit{logarithmic Atiyah exact sequence}
\begin{equation}\label{e6}
0\, \longrightarrow\, \text{ad}(E_G) \, \stackrel{\iota_0}{\longrightarrow}\,\text{At}(E_G)(-\log D)
\, \stackrel{\widehat{dp}}{\longrightarrow}\, TX(-D) \, \longrightarrow\, 0\, ,
\end{equation}
where $\widehat{dp}$ is the restriction of the homomorphism $d'p$ to $\text{At}(E_G)(-\log D)$, and
$\iota_0$ is given by the homomorphism $\text{ad}(E_G) \, \longrightarrow\,\text{At}(E_G)$
in \eqref{e4}. We have the following commutative diagram of homomorphisms 
\begin{equation}\label{e7}
\begin{matrix}
0 & \longrightarrow & \text{ad}(E_G) & \stackrel{\iota_0}{\longrightarrow} & \text{At}(E_G)(-\log D)
& \stackrel{\widehat{dp}}{\longrightarrow}& TX(-D) & \longrightarrow & 0\\
&& \Vert &&\,\,\,\,\Big\downarrow\iota &&\,\, \,\,\Big\downarrow\iota'\\
0 & \longrightarrow & \text{ad}(E_G) & \longrightarrow & \text{At}(E_G)
& \stackrel{d'p}{\longrightarrow}& TX & \longrightarrow & 0
\end{matrix}
\end{equation}
where $\iota$ and $\iota'$ are the natural inclusion maps.

A \textit{logarithmic connection} on $E_G$ with polar part in $D$ is a holomorphic homomorphism
$$
\Phi\, :\, TX(-D)\, \longrightarrow\,\text{At}(E_G)(-\log D)
$$
such that
\begin{equation}\label{e8}
\widehat{dp}\circ\Phi\, =\, \text{Id}_{TX(-D)}\, ,
\end{equation}
where $\widehat{dp}$ is the surjective homomorphism in \eqref{e6}.

Since we have $\iota'\circ\widehat{dp}\,=\, (d'p)\circ\iota$ (see \eqref{e7}), and $\iota' (y)(TX(-D)_y)\,=\, 0$
for every point $y\, \in\, D$ in \eqref{e1}, for a logarithmic connection
$\Phi$ on $E_G$, from \eqref{e8} we have
$$
\iota'\circ\widehat{dp}\circ\Phi(TX(-D)_y)\,=\,\iota' (y)(TX(-D)_y) \,=\, 0
$$
for every $y\, \in\, D$. Consequently, from the commutativity of \eqref{e7}
we conclude that $(d'p)\circ\iota\circ\Phi(TX(-D)_y)\,=\ 0$. This
implies that
\begin{equation}\label{e9}
\iota\circ\Phi(TX(-D)_y)\, \subset\, \text{ad}(E_G)_y\, \subset\, \text{At}(E_G)_y
\end{equation}
(see \eqref{e7}). On the other hand, $TX(-D)_y\,=\, {\mathbb C}$ by the Poincar\'e adjunction formula
\cite[p.~146]{GH}; for any holomorphic coordinate function $z$ on $X$ around $y$ with $z(y)\,=\, 0$, the map
${\mathbb C}\, \longrightarrow\, TX(-D)_y$ defined by $\lambda\, \longmapsto\, (\lambda\frac{dz}{z})(y)$
is actually independent of the choice of the coordinate function $z$. The element
$$
(\iota\circ\Phi)(y)(1)\, \in \, \text{ad}(E_G)_y
$$
(see \eqref{e9}) is called the \textit{residue} of $\Phi$ at $y$; see \cite{De}.

Fixing $X$, the infinitesimal deformations of the principal $G$--bundle $E_G$ are parametrized
by $H^1(X,\, \text{ad}(E_G))$ \cite{Do}.

We recall that the infinitesimal deformations of the $d$-pointed Riemann surface $(X,\, D)$ 
are parametrized by $H^1(X,\, TX(-D))$. The infinitesimal deformations of the above triple 
$(X,\, D,\, E_G)$ are parametrized by $H^1(X,\, \text{At}(E_G)(-\log D))$ \cite{BHH}, 
\cite{Ch1}, \cite{Ch2}, \cite{Hu}, \cite{Do}.

The following lemma is standard (see \cite[Section 2.2]{BHH} and \cite{Hu}).

\begin{lemma}\label{lem1}\mbox{}
\begin{enumerate}
\item The homomorphism of cohomologies $$\widehat{dp}_*\, :\, H^1(X,\, {\rm At}(E_G)(-\log D))\, \longrightarrow\,
H^1(X,\, TX(-D))\, ,$$ induced by the projection $\widehat{dp}$ in \eqref{e6}, corresponds to the forgetful
map from the infinitesimal deformations of the triple $(X,\, D,\, E_G)$ to the
infinitesimal deformations of the pair $(X,\, D)$ obtained by simply forgetting the principal $G$--bundle.

\item The homomorphism of cohomologies $$\iota_{0*} \, : \, H^1(X,\, {\rm ad}(E_G)))\, \longrightarrow\,H^1(X,\,
{\rm At}(E_G)(-\log D))\, ,$$
induced by the homomorphism $\iota_0$ in \eqref{e6}, coincides with the map from the 
infinitesimal deformations of the principal $G$--bundle $E_G$ to the
infinitesimal deformations of the triple $(X,\, D,\, E_G)$ obtained by keeping the pair $(X,\, D)$ fixed.
\end{enumerate}
\end{lemma}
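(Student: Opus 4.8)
The plan is to prove both assertions by passing to Čech cocycle representatives with respect to a suitable open cover and identifying each Kodaira--Spencer class with a cocycle valued in the appropriate term of the logarithmic Atiyah exact sequence \eqref{e6}. First I would fix an open cover $\{U_\alpha\}$ of $X$ by coordinate charts, with coordinate $z_\alpha$ on $U_\alpha$, chosen so that each point of $D$ lies in exactly one chart and is cut out there by $z_\alpha\,=\,0$; over each $U_\alpha$ I fix a holomorphic trivialization of $E_G$, with transition functions $g_{\alpha\beta}\,:\,U_\alpha\cap U_\beta\,\longrightarrow\, G$. Using such a trivialization, the Atiyah sequence \eqref{e4} splits locally, giving $\text{At}(E_G)|_{U_\alpha}\,\cong\, TX|_{U_\alpha}\oplus{\mathfrak g}$, and hence the local splitting $\text{At}(E_G)(-\log D)|_{U_\alpha}\,\cong\, TX(-D)|_{U_\alpha}\oplus{\mathfrak g}$ of the logarithmic subsheaf defined in \eqref{e5}.

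Next I would recall the standard cocycle descriptions of the three deformation spaces. An infinitesimal deformation of the pair $(X,\,D)$ is recorded by the Kodaira--Spencer cocycle $\{v_{\alpha\beta}\}$ obtained by differentiating the coordinate transitions $z_\alpha\,=\, f_{\alpha\beta}(z_\beta,\,t)$ at $t\,=\,0$; the requirement that the marked points be preserved forces each $v_{\alpha\beta}$ to be a section of $TX(-D)$, so this cocycle represents the class in $H^1(X,\,TX(-D))$. An infinitesimal deformation of $E_G$ with $(X,\,D)$ fixed is recorded by $\{s_{\alpha\beta}\}$ with $s_{\alpha\beta}\,=\,(\partial_t g_{\alpha\beta})\,g_{\alpha\beta}^{-1}|_{t=0}$, a Čech $1$-cocycle valued in $\text{ad}(E_G)$, representing the class in $H^1(X,\,\text{ad}(E_G))$. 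A simultaneous deformation of the pointed surface and the bundle produces the pair $\{(v_{\alpha\beta},\,s_{\alpha\beta})\}$, which, under the local splitting above, is exactly a Čech $1$-cocycle valued in $\text{At}(E_G)(-\log D)$; this is the content of the identification of $H^1(X,\,\text{At}(E_G)(-\log D))$ with the deformations of the triple $(X,\,D,\,E_G)$, as in \cite{BHH}, \cite{Hu}.

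With these representatives in hand, both statements follow immediately. The forgetful map to the deformations of $(X,\,D)$ sends the cocycle $\{(v_{\alpha\beta},\,s_{\alpha\beta})\}$ to $\{v_{\alpha\beta}\}$, which is precisely the effect of the projection $\widehat{dp}$ on cocycles, so on cohomology it is $\widehat{dp}_*$; this proves (1). Likewise, a deformation of $E_G$ alone has vanishing base component, so its image in the deformations of the triple is the cocycle $\{(0,\,s_{\alpha\beta})\}$, which is exactly $\iota_0$ applied to $\{s_{\alpha\beta}\}$; this proves (2).

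The one point requiring care --- and the main obstacle --- is verifying that the combined cocycle $\{(v_{\alpha\beta},\,s_{\alpha\beta})\}$ genuinely satisfies the cocycle condition for the transition functions of $\text{At}(E_G)(-\log D)$, rather than for some a priori larger or unrelated sheaf. This amounts to checking that the mixed term arising when one differentiates $g_{\alpha\beta}$ along the varying complex structure combines with the adjoint twist exactly as in the transition rule $(v,\,s)\,\longmapsto\,(f'_{\alpha\beta}\,v,\,\text{Ad}(g_{\alpha\beta})s+(dg_{\alpha\beta})\,g_{\alpha\beta}^{-1}(v))$ of the Atiyah bundle, and that the marked-point constraint on $\{v_{\alpha\beta}\}$ is precisely the $-\log D$ twisting in \eqref{e5}. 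Once the local splitting and transition rules are written out this is a direct computation; the commutative diagram \eqref{e7} then records the compatibility between this logarithmic, marked-point-preserving deformation theory and the ordinary Atiyah-sequence deformation theory, completing the identification.
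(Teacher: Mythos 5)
Your proposal is correct, and it is essentially the argument the paper relies on: the paper gives no proof of its own for this lemma, declaring it standard and citing \cite[Section 2.2]{BHH} and \cite{Hu}, and your \v{C}ech-cocycle computation (local trivializations, Kodaira--Spencer pairs $(v_{\alpha\beta},\,s_{\alpha\beta})$, and the transition rule of the logarithmic Atiyah bundle) is precisely the standard identification those references carry out. The only cosmetic imprecision is that, with marked points placed in single charts, the $-D$ twist is invisible on overlaps and really enters through the coboundaries (reparametrizations fixing $D$), but this does not affect the cocycle-level comparison of the two maps.
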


\section{Logarithmic connections and isomonodromy}\label{log connect} 

\subsection{Logarithmic Atiyah bundle}

Since $\text{At}(E_G)\, :=\, (TE_G)/G$ (see \eqref{e3}), the subsheaf $\text{At}(E_G)(-\log D)\,
\subset\, \text{At}(E_G)$ corresponds to a subsheaf of the sheaf of $G$--invariant holomorphic
vector fields on $E_G$. We will have occasions to use the following description of this subsheaf
of the sheaf of $G$--invariant holomorphic vector fields on $E_G$.

Let $$\widetilde{D}\, :=\, p^{-1}(D) \, \subset\, E_G$$ be the divisor, where $p$ is the
projection in \eqref{e2}. Let
$$
TE_G(-\log \widetilde{D})\, \subset\, TE_G
$$
be the corresponding logarithmic tangent bundle. We recall that this subsheaf is characterized by
the following property: A holomorphic vector field $v$, defined on an open subset $U\,\subset\, E_G$,
is a section of $TE_G(-\log \widetilde{D})$ if and only if for every 
holomorphic function $f$ on $U$ that vanishes on $\widetilde{D}\bigcap U$, the function
$v(f)$ also vanishes on $\widetilde{D}\bigcap U$. Since the divisor $\widetilde{D}$ is smooth,
it follows that $TE_G(-\log \widetilde{D})$ is a locally
free ${\mathcal O}_{E_G}$--submodule of $TE_G$. Consequently, $TE_G(-\log \widetilde{D})$ is a
holomorphic vector bundle on $E_G$. The above characterizing property of $TE_G(-\log \widetilde{D})$
immediately implies that the Lie bracket operation of locally defined holomorphic vector fields on
$E_G$ preserves the subsheaf $TE_G(-\log \widetilde{D})$.

To describe $TE_G(-\log \widetilde{D})$ locally, take a point $x\, \in\, \widetilde{D}$. Let
$(z_1,\, z_2,\, \cdots,\, z_m)$ be holomorphic coordinate functions on $E_G$ defined around $x$
such that $z_1\,=\, p\circ z$ for some holomorphic coordinate function $z$ on $X$ around $p(x)$, and
also $z_i(x)\,=\, 0$ for all $1\, \leq\, i\, \leq\, m$; here
$p$ denotes the projection in \eqref{e2}. Then $TE_G(-\log \widetilde{D})$ around $x$
is generated by the holomorphic vector fields $z_1\frac{\partial}{\partial z_1},\,
\frac{\partial}{\partial z_2},\, \cdots,\, \frac{\partial}{\partial z_m}$.

The action of $G$ on $TE_G$, induced by the action of $G$ on $E_G$, actually preserves the
subsheaf $TE_G(-\log \widetilde{D})$. It is now straightforward to check that
\begin{equation}\label{e10}
\text{At}(E_G)(-\log D)\,=\, TE_G(-\log \widetilde{D})/G\, .
\end{equation}

Let
\begin{equation}\label{ep}
\Phi\, :\, TX(-D)\, \longrightarrow\,\text{At}(E_G)(-\log D)
\end{equation}
be a logarithmic connection
on $E_G$. Let
\begin{equation}\label{e11}
\widetilde{\Phi}\, :\, \text{At}(E_G)(-\log D)\, \longrightarrow\,\text{ad}(E_G)
\end{equation}
be the holomorphic homomorphism uniquely determined by the following conditions:
\begin{enumerate}
\item $\widetilde{\Phi}\circ\iota_0\,=\, \text{Id}_{\text{ad}(E_G)}$, where $\iota_0$ is
the injective homomorphism in \eqref{e6}, and

\item $\text{kernel}(\widetilde{\Phi})\,=\, \Phi(TX(-D))$.
\end{enumerate}
In view of \eqref{e3p} and \eqref{e10}, the homomorphism $\widetilde{\Phi}$ in \eqref{e11}
produces a $G$--invariant surjective holomorphic homomorphism
\begin{equation}\label{e12}
\Phi'_0\ :\, TE_G(-\log \widetilde{D})\, \longrightarrow\,\text{kernel}(dp)\, ,
\end{equation}
where $p$ is the projection in \eqref{e2}.

Let $w$ be a holomorphic vector field on an open subset $U\, \subset\, X$
that vanishes on $U\bigcap D$. In view of \eqref{e10},
the section $\Phi(w)$ of $\text{At}(E_G)(-\log D)\big\vert_U$ corresponds to a unique $G$--invariant
holomorphic section of $TE_G(-\log \widetilde{D})\big\vert_{p^{-1}(U)}$
satisfying the condition that $$dp(\Phi(w))\,=\, p^*w$$ (as sections of $p^*TX)$); let
\begin{equation}\label{k1}
\Phi(w)'\, \in\, H^0(p^{-1}(U),\, TE_G(-\log \widetilde{D})\big\vert_{p^{-1}(U)})
\end{equation}
denote this section constructed from $w$.

\begin{lemma}\label{lem2}
Let $v$ be a $G$--invariant
holomorphic section of $TE_G(-\log \widetilde{D})\big\vert_{p^{-1}(U)}$. Then the following three
hold:
\begin{enumerate}
\item The holomorphic section $\Phi'_0([\Phi(w)',\, v])$ of ${\rm kernel}(dp)$
is $G$--invariant, where $\Phi'_0$ is the homomorphism in \eqref{e12} and
$\Phi(w)'$ is the section of $TE_G(-\log \widetilde{D})\big\vert_{p^{-1}(U)}$ constructed above from $w$.

\item For every holomorphic function $h$ on $U$,
$$
\Phi'_0([\Phi(h\cdot w)',\, v])\,=\, (h\circ p)\cdot \Phi'_0([\Phi(w)',\, v])\, .
$$

\item If $v\,=\, \Phi(v_1)'$ for some holomorphic section $v_1$ of $T(-D)\big\vert_U$, then
$$
\Phi'_0([\Phi(w)',\, v])\,=\, 0\, .
$$
\end{enumerate}
\end{lemma}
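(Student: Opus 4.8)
The plan is to verify the three assertions of Lemma~\ref{lem2} by working in the local coordinates $(z_1,\, \cdots,\, z_m)$ around a point of $\widetilde D$ introduced above, where $z_1\,=\,p\circ z$ for a coordinate $z$ on $X$, and to exploit the compatibility of the Lie bracket with the $G$-action and with the logarithmic structure. First I would recall that $TE_G(-\log\widetilde D)$ is a sheaf of Lie algebras under the bracket (this was noted after \eqref{e10}), so $[\Phi(w)',\, v]$ is again a holomorphic section of $TE_G(-\log\widetilde D)$, and hence applying $\Phi'_0$ from \eqref{e12} makes sense. The key structural fact is that $\Phi(w)'$ is the $G$-invariant lift of $\Phi(w)$ characterized by $dp(\Phi(w)')\,=\,p^*w$ as in \eqref{k1}.

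For part~(1), I would argue that since $G$ acts on $E_G$ and this action preserves $TE_G(-\log\widetilde D)$, it induces an action on sections; the bracket of two $G$-invariant sections is $G$-invariant because the $G$-action is by holomorphic automorphisms commuting with the Lie bracket. As $\Phi(w)'$ is $G$-invariant by its defining property and $v$ is $G$-invariant by hypothesis, $[\Phi(w)',\, v]$ is $G$-invariant, and then $\Phi'_0$ of it is $G$-invariant because $\Phi'_0$ in \eqref{e12} is itself $G$-equivariant (indeed it descends to $\widetilde\Phi$ in \eqref{e11}). For part~(2), the point is the Leibniz rule for the bracket: writing $\Phi(h\cdot w)'$ one uses that $\Phi$ is ${\mathcal O}_X$-linear, so $\Phi(h\cdot w)\,=\,(h)\cdot\Phi(w)$ and hence the lift satisfies $\Phi(h\cdot w)'\,=\,(h\circ p)\cdot\Phi(w)'$. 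Expanding $[(h\circ p)\Phi(w)',\, v]\,=\,(h\circ p)[\Phi(w)',\, v]\,-\,v(h\circ p)\cdot\Phi(w)'$, I would then observe that the extra term $v(h\circ p)\cdot\Phi(w)'$ lies in the kernel of $\Phi'_0$: indeed $\Phi(w)'\,\in\,\Phi(TX(-D))'$, and by the defining condition (2) of $\widetilde\Phi$ in \eqref{e11}, namely $\text{kernel}(\widetilde\Phi)\,=\,\Phi(TX(-D))$, any vertical--compatible multiple of $\Phi(w)'$ maps to zero under $\Phi'_0$. This last point is exactly what part~(3) isolates.

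For part~(3), when $v\,=\,\Phi(v_1)'$ for a section $v_1$ of $TX(-D)|_U$, I would show $\Phi'_0([\Phi(w)',\,\Phi(v_1)'])\,=\,0$ by relating the bracket to the curvature of $\Phi$. The crucial observation is that on a Riemann surface the curvature of any logarithmic connection vanishes for dimension reasons (a $2$-form on a curve is zero once the polar locus is accounted for), so the horizontal lift $\Phi$ is a flat (integrable) logarithmic connection; concretely, $\Phi'_0([\Phi(w)',\,\Phi(v_1)'])$ computes, up to the Lie bracket $[w,\,v_1]$ on $X$, the curvature $F_\Phi(w,\,v_1)$, which is a section of $\text{ad}(E_G)$. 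Since $\Phi(TX(-D))$ is closed under the bracket precisely when the connection is flat, $[\Phi(w)',\,\Phi(v_1)']$ stays in $\Phi(TX(-D))'\,=\,\text{kernel}(\Phi'_0)$, giving the vanishing. I expect the main obstacle to be bookkeeping the relation between the bracket $[\Phi(w)',\,\Phi(v_1)']$ and both the curvature term and the lift $\Phi([w,\,v_1])'$: one must check carefully that the vertical component extracted by $\Phi'_0$ is exactly the curvature and that it vanishes on the curve, keeping track of the logarithmic poles along $\widetilde D$ so that everything remains a genuine section of the logarithmic sheaves rather than acquiring spurious poles.
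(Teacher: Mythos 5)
Your proposal is correct and follows essentially the same route as the paper: part (1) via $G$-equivariance of the bracket and of $\Phi'_0$, part (2) via the identity $[\Phi(h\cdot w)',\,v]\,=\,(h\circ p)[\Phi(w)',\,v]-v(h\circ p)\cdot\Phi(w)'$ together with $\Phi'_0(\Phi(w)')\,=\,0$, and part (3) by integrability of the horizontal distribution. The only cosmetic difference is in part (3), where you invoke vanishing of the curvature on a curve (since $\bigwedge^2(TX)^*\,=\,0$) while the paper cites the automatic integrability of any holomorphic one-dimensional distribution; these are the same dimension argument.
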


\begin{proof}
As noted before, the Lie bracket operation of locally defined holomorphic vector fields on
$E_G$ preserves the subsheaf $TE_G(-\log \widetilde{D})$. Since the homomorphism $\Phi'_0$
in \eqref{e12} is $G$--invariant, and $\Phi(w)'$ is $G$--invariant, while $v$ is
given to be $G$--invariant, it follows that $\Phi'_0([\Phi(w)',\, v])$ is
also $G$--invariant.

To prove the second statement, consider the identity
$$
[\Phi(h\cdot w)',\, v]\,=\, (h\circ p)\cdot [\Phi(w)',\, v]
-v(h\circ p)\cdot \Phi(w)'\, .
$$
Since $\Phi'_0(\Phi(w)')\,=\, 0$, where $\Phi'_0$ is the homomorphism
in \eqref{e12}, the second statement follows from this identity.

The third statement follows from the fact that any holomorphic one-dimensional distribution is integrable.
\end{proof}

In view of \eqref{e3p} and \eqref{e10}, from Lemma \ref{lem2}(2) we get a homomorphism
$$
\widetilde{\Phi}\,\, :\,\,TX(-D)\otimes \text{At}(E_G)(-\log D)\, \longrightarrow\, \text{ad}(E_G)\, .
$$
Then the homomorphism
$$
\widetilde{\Phi}\otimes {\rm Id}_{TX(-D)^*}\, :\,\text{At}(E_G)(-\log D)TX(-D)\otimes TX(-D)^*
\, \longrightarrow\, \text{ad}(E_G)\otimes TX(-D)^*
$$
produces a homomorphism
\begin{equation}\label{e13}
\widehat{\Phi}
\, :\, \text{At}(E_G)(-\log D)\, \longrightarrow\, \text{ad}(E_G)\otimes TX(-D)^*\,=\,
\text{ad}(E_G)\otimes K_X(D)
\end{equation}
using the duality pairing $TX(-D)\otimes TX(-D)^*\, \longrightarrow\, {\mathcal O}_X$.

Let ${\mathcal C}_{\bullet}$ be the $2$--term complex of sheaves on $X$
\begin{equation}\label{e14}
{\mathcal C}_{\bullet}\,\, :\,\,
{\mathcal C}_0\,:=\, \text{At}(E_G)(-\log D)\, \stackrel{\widehat{\Phi}}{\longrightarrow}\,
{\mathcal C}_1\,:=\, \text{ad}(E_G)\otimes K_X(D)\, ,
\end{equation}
where $\widehat{\Phi}$ is the homomorphism constructed in \eqref{e13}, and ${\mathcal C}_i$ is
at the $i$-th position.

{}From Lemma \ref{lem2}(3) we know that
$$
\widehat{\Phi}\circ\Phi\,=\, 0\, .
$$
Consequently, the logarithmic connection $\Phi$ in \eqref{ep} produces a homomorphism of complexes
\begin{equation}\label{e15}
\Phi^C\, :\, TX(-D)\, \longrightarrow\, {\mathcal C}_{\bullet}\, ,
\end{equation}
where $TX(-D)$ is the one-term complex concentrated at the $0$-th position, and
${\mathcal C}_{\bullet}$ is the complex in \eqref{e14}. In other words, we have the
commutative diagram
\begin{equation}\label{e15a}
\begin{matrix}
TX(-D) & : & TX(-D) & \longrightarrow & 0\\
\Phi^C\Big\downarrow\,\,\,\,\,\,\,\,
&& \,\,\,\, \,\Big\downarrow\Phi && \Big\downarrow\\
{\mathcal C}_\bullet &: & {\mathcal C}_0 & \stackrel{\widehat{\Phi}}{\longrightarrow} & {\mathcal C}_1
\end{matrix}
\end{equation}

\subsection{Character variety}

Let ${\mathcal T}_{g,d}$ denote the Teichm\"uller space of compact connected Riemann surfaces of genus $g$
with $d$ ordered marked points, where $d\, \geq\,1$. We will always assume that $3g-3+d\, >\, 0$. This
${\mathcal T}_{g,d}$ is a complex
manifold of dimension $3g-3+d$. We recall a description of ${\mathcal T}_{g,d}$ which will be used here.
Let $S_0$ be an oriented $C^\infty$ surface of genus $g$, and let $D_0\, \subset\, S_0$ be
$d$ ordered distinct points. Let $\mathbf{C}(S_0)$ denote the space of all $C^\infty$ complex structures on $S_0$
compatible with the given orientation of $S_0$. Let $\text{Diff}^0_{D_0}(S_0)$ denote the group of all orientation
preserving diffeomorphisms
$$
\beta\, :\, S_0\, \longrightarrow\, S_0
$$
such that
\begin{itemize}
\item $\beta(x)\,=\, x$ for every $x\, \in\, D_0$, and

\item $\beta$ is homotopic to the identity map
of $S_0$ through a continuous family of diffeomorphisms $\beta_t$ of $S_0$, $0\,
\leq\, t\, \leq\, 1$, such that $\beta_t(x)\,=\, x$ for all $t$ and all $x\, \in\, D_0$.
\end{itemize}
The group $\text{Diff}^0_{D_0}(S_0)$ acts on $\mathbf{C}(S_0)$ by
pushing forward complex structures using diffeomorphisms. Then we have
$$
{\mathcal T}_{g,d}\, =\, \mathbf{C}(S_0)/\text{Diff}^0_{D_0}(S_0)\, .
$$

We now assume the complex connected affine algebraic group $G$ to be reductive.
The complement $S_0\setminus D_0$ will be denoted by $S'_0$. Let
\begin{equation}\label{e16}
{\mathcal R}_G(S'_0)\, :=\, \text{Hom}^{\rm ir}(\pi_1(S'_0),\, G)/G
\end{equation}
be the irreducible $G$--character variety for $S'_0$; the space $\text{Hom}^{\rm ir}(\pi_1(S'_0),\, G)$ consists of
all homomorphisms $\gamma\, :\, \pi_1(S'_0)\, \longrightarrow\, G$ such that $\gamma(\pi_1(S'_0))$
is not contained in any proper parabolic subgroup of $G$.
We note that ${\mathcal R}_G(S'_0)$ does
not depend on the choice of the base point needed to define the fundamental group of $S'_0$. 
Since $\pi_1(S'_0)$ is finitely presented, the complex algebraic structure of $G$ produces a complex
algebraic structure on ${\mathcal R}_G(S'_0)$, so ${\mathcal R}_G(S'_0)$ is a complex affine variety.
It is in fact a smooth complex orbifold. We have
\begin{equation}\label{dr}
\dim {\mathcal R}_G(S'_0)\,=\, (2g+d-1)\cdot\dim_{\mathbb C} G - \dim_{\mathbb C} [G,\, G]\, .
\end{equation}

For more details of the above dimension count the reader is referred to \cite{Go} and \cite[Proposition 49]{Si} (to 
which the monodromy around the poles should be added).

\subsection{Monodromy of logarithmic connections}

A logarithmic connection $\Phi$ on a holomorphic principal $G$--bundle $E_G\, \longrightarrow\, X$
is called \textit{irreducible} if there is no holomorphic reduction of structure group $E_P\, \subset\,
E_G\big\vert_{X\setminus D}$ to some proper parabolic subgroup $P\, \subset\, G$,
over the open subset $X\setminus D\, \subset\, X$, such that $\Phi$ is induced by a holomorphic connection
on $E_P$.

The above definition of irreducibility needs a clarification, because in the 
special case where $E_G$ is the trivial holomorphic principal $G$--bundle,
and $D$ is the zero divisor --- so the logarithmic connection $\Phi$ is holomorphic, meaning
it has no poles --- this definition
of irreducibility is, a priori, weaker than the definition, given in the Introduction,
of irreducible holomorphic $\mathfrak{g}$--differential systems. More precisely, in the
definition, given in the Introduction, of irreducible holomorphic $\mathfrak{g}$--differential systems,
the principal $P$--bundle is required to be the trivial bundle $X\times P\, \longrightarrow\, X$, while the above
definition does not impose any other condition on $E_P$ apart from the condition that the logarithmic connection $\Phi$
is induced by a logarithmic connection on $E_P$. We will show the following:

Let $\Phi$ be a holomorphic connection on the trivial principal $G$--bundle $${\mathcal E}^0_G\, :=\, X\times G
\, \longrightarrow\, X\, ,$$ and let $E_P\, \subset\, {\mathcal E}^0_G$ be a holomorphic reduction of
structure group to $P$ over $X$, such that
$\Phi$ is induced by a holomorphic connection on $E_P$. Then $E_P$ is the trivial principal $P$--bundle
$X\times P\, \longrightarrow\, X$.

To prove the above statement, first note that a holomorphic reduction of structure group
$E_P\, \subset\, {\mathcal E}^0_G$ to $P$ is given by a holomorphic map
$\phi\, :\, X\, \longrightarrow\, G/P$. For this map $\phi$, we have
\begin{equation}\label{d1}
\phi^*T(G/P)\,=\, \text{ad}({\mathcal E}^0_G)/\text{ad}(E_P)\, .
\end{equation}
If $E_P$ admits a holomorphic connection $\Phi_P$, then $\Phi_P$
induces holomorphic connections on both $\text{ad}({\mathcal E}^0_G)$ and $\text{ad}(E_P)$.
This implies that
$$
\text{degree}(\text{ad}({\mathcal E}^0_G))\,=\, 0\, =\, \text{degree}(\text{ad}(E_P))\, ,
$$
and hence from \eqref{d1} it follows that
\begin{equation}\label{d2}
\text{degree}(\phi^*T(G/P))\,=\,0\, .
\end{equation}
Since the anti-canonical
line bundle $K^{-1}_{G/P}$ on $G/P$ is ample, from \eqref{d2} we conclude that $\phi$ is a constant map.
Consequently, $P$ is the trivial principal $P$--bundle $X\times P\, \longrightarrow\, X$.

Let
\begin{equation}\label{e17}
\varphi\, :\, {\mathcal B}_G\, \longrightarrow\, {\mathcal T}_{g,d}
\end{equation}
be the moduli space of irreducible logarithmic connections on topologically trivializable
holomorphic principal $G$--bundles. So ${\mathcal B}_G$ is the moduli space of
quadruples of the form $(X,\, D,\, E_G,\, \Phi)$, where
\begin{itemize}
\item $(X,\, D)\, \in\, {\mathcal T}_{g,d}$,

\item $E_G$ is a holomorphic principal $G$--bundle on $X$ such that $E_G$ is topologically trivial,
and

\item $\Phi$ is an irreducible logarithmic connection on $E_G$ whose
polar part is contained in $D$.
\end{itemize}
The map $\varphi$ in \eqref{e17} sends any $(X,\, D,\, E_G,\, \Phi)$ to the pair $(X,\, D)$. The moduli space 
${\mathcal B}_G$ is a smooth complex orbifold.

Any logarithmic connection on a Riemann surface $X$ is flat because $\bigwedge ^2 (TX)^*\,=\, 0$ (consequently, its
curvature $2$-form vanishes identically). So considering monodromy representation of logarithmic connections, we 
get a holomorphic map
\begin{equation}\label{e18}
\theta\, :\, {\mathcal B}_G\, \longrightarrow\, {\mathcal R}_G(S'_0)\, ,
\end{equation}
where ${\mathcal R}_G(S'_0)$ is constructed in \eqref{e16}.

We will prove that a logarithmic connection is irreducible 
if and only if the corresponding monodromy representation is irreducible.

First, let $E_G$ be a holomorphic principal $G$--bundle on $X$ equipped with a logarithmic connection $\Phi$, such 
that $\Phi$ is \textit{not} irreducible. So there is a proper parabolic subgroup $P\, \subset\, G$, a holomorphic 
reduction of the structure group of $E_G\big\vert_{X\setminus D}$ to $P$, given by a subbundle $E_P\, \subset\, 
E_G\big\vert_{X\setminus D}$, and a holomorphic connection $\Phi_P$ on $E_P$, such that the logarithmic connection 
on $E_G$ induced by $\Phi_P$ coincide with $\Phi$. Since the monodromy of $\Phi_P$ coincides with the monodromy of 
$\Phi$, the monodromy of $\Phi$ is contained in $P$, and hence
the monodromy representation for $\Phi$ is not irreducible. To prove the converse, let $\Phi$ be an 
irreducible logarithmic connection on a holomorphic principal $G$--bundle $E_G$ on $X$. Take a point $x_0\, \in\, 
X\setminus D$ and fix a point $z_0\, \in\, (E_G)_{x_0}$ in the fiber of $E_G$ over $x_0$. Taking parallel 
translations of $z_0$ along all possible homotopy classes of loops based at $x_0$ we get the monodromy 
representation
$$
H_\Phi\, :\, \pi_1(X\setminus D,\, x_0)\, \longrightarrow\, G
$$
of $\Phi$. Assume that the image of $H_\Phi$ is contained in a parabolic subgroup $P\, \subsetneq \, G$. Let 
${\mathcal S}\, \subset\, E_G\big\vert_{X\setminus D}$ be the subset obtained by taking parallel translations of 
$z_0$ along all possible homotopy classes of paths starting at $x_0$. Then
$$
E_P\, :=\, {\mathcal S}P\, \subset\, E_G\big\vert_{X\setminus D}
$$
(recall that $G$ acts on $E_G$) is a holomorphic reduction of the structure group of $E_G\big\vert_{X\setminus D}$ 
to $P$ over $X\setminus D$. The logarithmic connection $\Phi$ produces a holomorphic connection on the holomorphic 
principal $P$--bundle $E_P$, which in turn induces $\Phi$. Consequently, the logarithmic connection $\Phi$ is not 
irreducible. Thus, a logarithmic connection is irreducible if and only if the corresponding monodromy 
representation is irreducible.

\subsection{Isomonodromy}

Let
$$
d\theta\, :\, T{\mathcal B}_G\, \longrightarrow\, \theta^*T{\mathcal R}_G(S'_0)
$$
be the differential of the map $\theta$ in \eqref{e18}. The map $\theta$ is a holomorphic
submersion, meaning $d\theta$ is surjective. The kernel of $d\theta$
\begin{equation}\label{e19}
{\mathcal I}\, :=\, \text{kernel}(d\theta)\, \subset\, T{\mathcal B}_G
\end{equation}
is a holomorphic foliation on ${\mathcal B}_G$; it is known as the \textit{isomonodromy foliation}.

For any point $(X,\, D)\, \in\, {\mathcal T}_{g,d}$, the restriction of $\theta$ to
$\varphi^{-1}((X, D))$, where $\varphi$ is the projection in \eqref{e17}, is a holomorphic local diffeomorphism.
Consequently, for any point $z\, \in\, {\mathcal B}_G$, the differential of $\varphi$ 
$$
d\varphi(z)\, :\, T_z{\mathcal B}_G\, \longrightarrow\, T_{\varphi(z)}{\mathcal T}_{g,d}\, ,
$$
when restricted to the subspace ${\mathcal I}_z\, \subset\, T_z{\mathcal B}_G$ in \eqref{e19}, produces an isomorphism
$${\mathcal I}_z\, \stackrel{\sim}{\longrightarrow}\, T_{\varphi(z)}{\mathcal T}_{g,d}\, .$$ Therefore, there
is a unique holomorphic homomorphism
\begin{equation}\label{e20}
\mathbb{L}\, :\, \varphi^* T{\mathcal T}_{g,d} \, \longrightarrow\, T{\mathcal B}_G
\end{equation}
such that
\begin{itemize}
\item $d\varphi\circ\mathbb{L}\,=\, \text{Id}_{\varphi^* T{\mathcal T}_{g,d}}$, and

\item $\mathbb{L}(\varphi^* T{\mathcal T}_{g,d})\, \subset\, {\mathcal I}$, where ${\mathcal I}$
is constructed in \eqref{e19}.
\end{itemize}

Since for any point $(X,\, D)\, \in\, {\mathcal T}_{g,d}$, the restriction of $\theta$ to
$\varphi^{-1}((X, D))$ is a holomorphic local diffeomorphism, it follows
that $\mathbb{L}$ actually satisfies the condition that
\begin{equation}\label{spi}
\mathbb{L}(\varphi^* T{\mathcal T}_{g,d})\, =\, {\mathcal I}\, .
\end{equation}

\begin{proposition}\label{prop1}
Take any point $z\,=\,(X,\, D,\, E_G,\, \Phi)\, \in\, {\mathcal B}_G$.
\begin{enumerate}
\item The tangent space to ${\mathcal B}_G$ at $z$ is the first hypercohomology
$$
T_z{\mathcal B}_G\,=\, {\mathbb H}^1(X,\, {\mathcal C}_{\bullet})\, ,
$$
where ${\mathcal C}_{\bullet}$ is the complex in \eqref{e14}.

\item The homomorphism
$$
\mathbb{L}(z)\, :\, T_{\varphi(z)}{\mathcal T}_{g,d}\,=\, H^1(X,\, TX(-D))
\, \longrightarrow\, T_z{\mathcal B}_G\,=\, {\mathbb H}^1(X,\, {\mathcal C}_{\bullet})
$$
in \eqref{e20} coincides with the homomorphism of hypercohomologies
$$
\Phi^C_*\, :\, H^1(X,\, TX(-D))
\, \longrightarrow\, {\mathbb H}^1(X,\, {\mathcal C}_{\bullet})
$$
induced by the homomorphism $\Phi^C$ in \eqref{e15}.
\end{enumerate}
\end{proposition}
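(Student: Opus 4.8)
The plan is to establish part (1) as a deformation-theoretic computation and then to deduce part (2) by verifying that $\Phi^C_*$ satisfies the two conditions characterizing $\mathbb{L}(z)$, after which the uniqueness built into \eqref{e20} forces $\mathbb{L}(z)=\Phi^C_*$. Throughout I would work with a fixed open cover $\{U_\alpha\}$ of $X$ adapted to $D$ and compute all (hyper)cohomology via \v{C}ech cochains.

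For part (1), I would describe a first-order deformation of the quadruple $(X,\,D,\,E_G,\,\Phi)$ by its \v{C}ech data. By Lemma \ref{lem1} and the references cited there, a first-order deformation of the triple $(X,\,D,\,E_G)$ is a $1$-cocycle $\{c_{\alpha\beta}\}$ with $c_{\alpha\beta}\in\mathcal{C}_0(U_{\alpha\beta})=\text{At}(E_G)(-\log D)(U_{\alpha\beta})$, recording the infinitesimal change of the coordinate-and-trivialization gluing as a $G$-invariant logarithmic vector field on $E_G$. Since two logarithmic connections on a fixed bundle differ by a section of $\text{ad}(E_G)\otimes K_X(D)$, a simultaneous first-order deformation of $\Phi$ on the patches is a $0$-cochain $\{a_\alpha\}$ with $a_\alpha\in\mathcal{C}_1(U_\alpha)$. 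The requirement that the deformed local connections glue compatibly over the deformed gluing linearizes to $a_\beta-a_\alpha=\widehat{\Phi}(c_{\alpha\beta})$ on $U_{\alpha\beta}$; the key point here is that the infinitesimal gauge-and-coordinate action of $c_{\alpha\beta}$ on the connection is exactly $\widehat{\Phi}(c_{\alpha\beta})$, which is precisely how $\widehat{\Phi}$ was manufactured from $\Phi$ in Lemma \ref{lem2} and \eqref{e13}. These are the $1$-hypercocycle conditions for $\mathcal{C}_\bullet$, while the trivial deformations arising from patchwise automorphisms $t_\alpha\in\mathcal{C}_0(U_\alpha)$ --- sending $c_{\alpha\beta}\mapsto c_{\alpha\beta}+(t_\beta-t_\alpha)$ and $a_\alpha\mapsto a_\alpha+\widehat{\Phi}(t_\alpha)$ --- are exactly the hypercoboundaries. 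This identifies $T_z\mathcal{B}_G$ with $\mathbb{H}^1(X,\,\mathcal{C}_\bullet)$.

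For part (2), recall that $\mathbb{L}(z)$ is the unique homomorphism with $d\varphi\circ\mathbb{L}(z)=\text{Id}$ and image inside $\mathcal{I}=\text{kernel}(d\theta)$, so it suffices to check these two properties for $\Phi^C_*$. The first is formal: the projection $\widehat{dp}$ of \eqref{e6} defines a morphism of complexes $q\,:\,\mathcal{C}_\bullet\,\longrightarrow\,TX(-D)$ (with $TX(-D)$ in degree $0$) which induces, via Lemma \ref{lem1}(1), the forgetful map $q_*=d\varphi$; and by \eqref{e8} one has $q\circ\Phi^C=\widehat{dp}\circ\Phi=\text{Id}$ already at the level of complexes, so $d\varphi\circ\Phi^C_*=\text{Id}$. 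For the second property I would use the retraction induced by the connection. The splitting of the logarithmic Atiyah sequence \eqref{e6} by $\Phi$ gives $\text{At}(E_G)(-\log D)=\iota_0(\text{ad}(E_G))\oplus\Phi(TX(-D))$, and, combined with $\text{Id}$ on $\mathcal{C}_1$ and the identity $\widehat{\Phi}\circ\Phi=0$ from Lemma \ref{lem2}(3), this upgrades $\widetilde{\Phi}$ of \eqref{e11} to a chain retraction $r\,:\,\mathcal{C}_\bullet\,\longrightarrow\,\mathcal{D}_\bullet$ onto the vertical complex $\mathcal{D}_\bullet:=[\,\text{ad}(E_G)\stackrel{\widehat{\Phi}\circ\iota_0}{\longrightarrow}\text{ad}(E_G)\otimes K_X(D)\,]$, whose hypercohomology is the tangent space to the fibre $\varphi^{-1}((X,D))$. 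Since $\text{kernel}(\widetilde{\Phi})=\Phi(TX(-D))$ by \eqref{e11}, we get $r\circ\Phi^C=0$, hence $r_*\circ\Phi^C_*=0$, so the image of $\Phi^C_*$ lies in $\text{kernel}(r_*)$.

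The remaining and, I expect, main obstacle is to identify $\text{kernel}(r_*)$ with $\mathcal{I}$, equivalently to show $d\theta\circ\Phi^C_*=0$. This is the infinitesimal isomonodromy principle: the complementary direction $\text{kernel}(r_*)$ --- along which one deforms $(X,\,D)$ while transporting $(E_G,\,\Phi)$ horizontally via its flat structure --- leaves the monodromy unchanged. The cleanest justification restricts everything to $S'_0=X\setminus D$, where $\Phi$ is an honest flat connection and $\mathbb{H}^1$ of the restricted vertical complex is the de Rham cohomology $H^1(S'_0,\,\text{ad}(\rho))$ computing $T_{[\rho]}\mathcal{R}_G(S'_0)$, with $\rho$ the monodromy of $\Phi$. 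Under the de Rham--Betti identification the differential $d\theta$ is computed by restricting a hypercocycle to $S'_0$ and applying $r$, since a first-order deformation of a flat connection contributes to the monodromy only through the cohomology class of its $\text{ad}$-valued $1$-form part; establishing this description of $d\theta$ rigorously is the crux of the argument. Granting it, the vanishing $d\theta\circ\Phi^C_*=0$ is immediate from $r\circ\Phi^C=0$, so $\text{image}(\Phi^C_*)\subseteq\mathcal{I}$; combined with the first property and the uniqueness of $\mathbb{L}$ in \eqref{e20}, this yields $\mathbb{L}(z)=\Phi^C_*$.
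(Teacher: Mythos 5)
Your strategy is reasonable, and it is worth noting at the outset that the paper itself offers no proof of Proposition \ref{prop1}: it defers entirely to \cite[Proposition 3.8]{Ko}, \cite[Proposition 5.1]{Ch2}, \cite{Ch1}, \cite{In} and \cite{BS}, so your attempt at a self-contained argument is being measured against those references rather than against anything in the text. The parts of your proposal that are verifiable do check out. The \v{C}ech description in part (1) is the standard hypercocycle picture of deformations of the quadruple. In part (2), the reduction via the uniqueness clause in \eqref{e20} is the right move; the map $q$ induced by $\widehat{dp}$ is indeed a morphism of complexes with $q\circ\Phi^C=\mathrm{Id}$, giving $d\varphi\circ\Phi^C_*=\mathrm{Id}$; and your $r$ is genuinely a chain retraction onto the vertical complex $\mathcal{D}_\bullet$ (on $\iota_0(\mathrm{ad}(E_G))$ this uses $\widetilde{\Phi}\circ\iota_0=\mathrm{Id}$, and on $\Phi(TX(-D))$ it uses $\widehat{\Phi}\circ\Phi=0$ from Lemma \ref{lem2}(3)), with $r\circ\Phi^C=0$.

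The genuine gap is the one you flag yourself and then ``grant'': the claim that $d\theta$ is computed by restricting a hypercocycle to $S'_0$, applying $r_*$, and passing through the de Rham--Betti identification. That claim is not a technical lemma one may assume; it \emph{is} the content of part (2). Indeed, since the splitting $\Phi$ decomposes $\mathcal{C}_\bullet$ as $\mathcal{D}_\bullet\oplus TX(-D)$, one has $\mathrm{kernel}(r_*)=\mathrm{image}(\Phi^C_*)$, and this subspace has the same dimension as $\mathcal{I}(z)$; so identifying $\mathrm{kernel}(r_*)$ with $\mathcal{I}(z)$ is logically equivalent to the statement $\mathbb{L}(z)=\Phi^C_*$ being proved. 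Your argument therefore reduces the proposition to an equivalent unproved assertion, and the two-line deduction that follows ($r\circ\Phi^C=0$, hence $d\theta\circ\Phi^C_*=0$) carries none of the weight. To close the gap one must do one of two things, and both require real work: (a) linearize the monodromy construction itself, i.e.\ prove in the \v{C}ech model that a first-order deformation of the quadruple changes the monodromy representation exactly through the $r_*$-image of its class in $H^1(S'_0,\mathrm{ad}(\rho))$ --- in particular, justify that the complex-structure component drops out and that the correct projection is the $\Phi$-dependent retraction $r$; or (b) bypass $d\theta$ by constructing the isomonodromic family directly (extend the local system flatly over a first-order deformation of $(X,D)$, e.g.\ via the Deligne extension) and compute that its Kodaira--Spencer class equals $\Phi^C_*(v)$, so that the image of $\Phi^C_*$ lies in $\mathcal{I}$ by construction. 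Option (b) is essentially what the references cited by the paper carry out. A smaller but related soft spot: in part (1), the key linearization --- that the infinitesimal action of a gluing cocycle $c_{\alpha\beta}$ on the connection is $\widehat{\Phi}(c_{\alpha\beta})$ --- is asserted rather than computed, though that one is a routine Lie-derivative check in the spirit of Lemma \ref{lem2}.
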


For the proof of Proposition \ref{prop1} the reader is referred to \cite[Proposition 3.8]{Ko}
(Proposition 3.4 of the arxiv version of \cite{Ko}), 
\cite[p.~1417, Proposition 5.1]{Ch2}, \cite{Ch1}, \cite{In} and \cite{BS}.

\section{Monodromy map on logarithmic differential systems}\label{monodromy map}

\subsection{Logarithmic differential systems}

As before, $G$ is a connected complex reductive affine algebraic group with $\dim G\, >\, 0$, and
\begin{equation}\label{ds}
d_s\, :=\, \dim_{\mathbb C}[G,\, G]\, .
\end{equation}

Consider the moduli space ${\mathcal B}_G$ in \eqref{e17}. Let
\begin{equation}\label{e21}
{\mathbb T}(G)\, \subset\, {\mathcal B}_G
\end{equation}
be the locus of all $(X,\, D,\, E_G,\, \Phi)$ such that
the holomorphic principal $G$--bundle $E_G$
on $X$ is holomorphically trivial.
Note that $E_G$ is
topologically trivial by the definition of ${\mathcal B}_G$;
also by the definition of ${\mathcal B}_G$
the logarithmic connection $\Phi$ is irreducible. The subset ${\mathbb T}(G)$ in
\eqref{e21} is a complex subspace.

\begin{proposition}\label{prop2}
The complex space ${\mathbb T}(G)$ in \eqref{e21} is a complex orbifold of dimension
$(g+d-1)\cdot\dim_{\mathbb C} G- d_s + 3g-3+d$, where $d_s$ is defined in \eqref{ds}.
\end{proposition}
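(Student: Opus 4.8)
The plan is to describe $\mathbb{T}(G)$ explicitly as a quotient fibered over the Teichm\"uller space and to read off its dimension from this description. Fix a point $(X,\,D)\,\in\,{\mathcal T}_{g,d}$. Since ${\mathcal E}^0_G\,=\,X\times G$ is the trivial bundle, every logarithmic connection $\Phi$ on it with polar part in $D$ is of the form $d+A$ for a unique
$$
A\,\in\,H^0(X,\,\text{ad}({\mathcal E}^0_G)\otimes K_X(D))\,=\,{\mathfrak g}\otimes H^0(X,\,K_X(D)),
$$
the last identification using that $\text{ad}({\mathcal E}^0_G)$ is the trivial bundle with fibre $\mathfrak g$. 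Two such connections $d+A_1$ and $d+A_2$ define isomorphic quadruples precisely when they are related by a holomorphic automorphism of the trivial bundle, that is, by a holomorphic map $X\to G$. As $X$ is compact and connected and $G$ is affine, every such map is constant, so the automorphism group of ${\mathcal E}^0_G$ is $G$ acting by constant gauge transformations; on $A$ this action is $A\,\longmapsto\,\text{Ad}(g)\circ A$, i.e.\ the adjoint action on the $\mathfrak g$--factor.

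First I would compute the dimension of the fibre $H^0(X,\,K_X(D))$. By Riemann--Roch,
$$
h^0(X,\,K_X(D))-h^1(X,\,K_X(D))\,=\,\deg(K_X(D))+1-g\,=\,(2g-2+d)+1-g\,=\,g-1+d,
$$
and Serre duality gives $h^1(X,\,K_X(D))\,=\,h^0(X,\,{\mathcal O}_X(-D))\,=\,0$ because $D$ is a nonzero effective divisor (here $d\,\geq\,1$). Hence $h^0(X,\,K_X(D))\,=\,g-1+d$, and the space of logarithmic connections on ${\mathcal E}^0_G$ is a vector space of dimension $(g-1+d)\cdot\dim_{\mathbb C}G$. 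Since $h^1$ vanishes identically, this dimension is independent of $(X,\,D)$, so these spaces fit together into a holomorphic vector bundle over ${\mathcal T}_{g,d}$ of rank $(g-1+d)\cdot\dim_{\mathbb C}G$, obtained as a direct image along the universal $d$--pointed curve. Its total space maps to ${\mathcal T}_{g,d}$ with fibre dimension $(g-1+d)\cdot\dim_{\mathbb C}G$.

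Next I would analyse the $G$--action and its stabilizers on the open locus of irreducible connections. The centre $Z(G)$ acts trivially under $\text{Ad}$, so the effective action is by the adjoint group $G/Z(G)$. The stabilizer of an irreducible $\Phi$ in $G$ equals the centralizer of its monodromy, which is $Z(G)$, since the centralizer of a subgroup not contained in any proper parabolic is the centre; hence the $G/Z(G)$--action has finite stabilizers on the irreducible locus. Consequently each orbit has dimension $\dim_{\mathbb C}G-\dim_{\mathbb C}Z(G)\,=\,\dim_{\mathbb C}G-(\dim_{\mathbb C}G-d_s)\,=\,d_s$, using that for reductive $G$ one has ${\mathfrak g}\,=\,{\mathfrak z}\oplus[{\mathfrak g},{\mathfrak g}]$ with $\dim_{\mathbb C}{\mathfrak z}\,=\,\dim_{\mathbb C}G-d_s$. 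Taking the fibrewise geometric quotient by $G/Z(G)$ therefore produces a complex orbifold whose fibre over $(X,\,D)$ has dimension $(g-1+d)\cdot\dim_{\mathbb C}G-d_s$.

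Finally, adding the dimension $3g-3+d$ of the base ${\mathcal T}_{g,d}$ yields
$$
\dim\mathbb{T}(G)\,=\,(g+d-1)\cdot\dim_{\mathbb C}G-d_s+3g-3+d,
$$
as claimed. The main obstacle I expect is not the dimension count but the verification that the quotient is genuinely an orbifold: one must check that the $G/Z(G)$--action on the irreducible locus has finite stabilizers and admits a geometric quotient fibrewise, and that these quotients glue to a smooth orbifold over ${\mathcal T}_{g,d}$, compatibly with the already-established orbifold structure on ${\mathcal B}_G$. The constancy of the fibre dimension (a consequence of the identical vanishing of $h^1(X,\,K_X(D))$) is exactly what makes the final dimension the sum of the base and fibre dimensions.
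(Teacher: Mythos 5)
Your proof is correct and takes essentially the same approach as the paper: both realize $\mathbb{T}(G)$ as the quotient, by the adjoint action of $G$, of an open dense subset of the total space over ${\mathcal T}_{g,d}$ of the bundle with fibre $H^0(X,\,K_X(D))\otimes\mathfrak{g}$ (the direct image along the universal curve), and read off the dimension from $h^0(X,\,K_X(D))\,=\,g-1+d$ together with finiteness of stabilizers modulo the centre. One caution: your intermediate claim that the centralizer of a subgroup not contained in any proper parabolic equals $Z(G)$ is too strong in general (for instance, the Klein four-group in $\mathrm{PGL}_2$ is irreducible and abelian, hence contained in its own centralizer); the correct and sufficient statement --- which is the form the paper uses, namely that the isotropy in $[G,\,G]$ is finite --- is that such a centralizer is finite modulo $Z(G)$, and your dimension count uses only this weaker fact.
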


\begin{proof}
Let $\varpi\, :\, C_{g,d}\, \longrightarrow\,{\mathcal T}_{g,d}$ be the universal Riemann surface equipped
with the universal divisor ${\mathcal D}\, \subset\, C_{g,d}$ of relative degree $d$ over ${\mathcal T}_{g,d}$. 
Let ${\mathcal K}\, \longrightarrow\, C_{g,d}$ be the relative holomorphic cotangent bundle for
the projection $\varpi$. 
Let $\varphi'\, :\, {\mathbb T}(G)\, \longrightarrow\,{\mathcal T}_{g,d}$ be the restriction of
the map $\varphi$ in \eqref{e17}.

For any Riemann surface $X$, the space of all logarithmic connections on the
trivial holomorphic principal $G$--bundle $X\times G \, \longrightarrow\, X$ with polar part contained
in $D\, \subset\, X$ is the vector space $H^0(X,\, K_X(D))\otimes {\mathfrak g}$, where $\mathfrak g$ is
the Lie algebra of $G$. Consequently, ${\mathbb T}(G)$ is the quotient of an open dense
subset of the total space of $\varpi_*({\mathcal K}\otimes {\mathcal O}_{C_{g,d}}({\mathcal D}))
\otimes {\mathfrak g}$ by the adjoint action of $G$; the group $G$ acts trivially on
$\varpi_*({\mathcal K}\otimes {\mathcal O}_{C_{g,d}}({\mathcal D}))$ and it has the
adjoint action on $\mathfrak g$.

Take a point $$\textbf{w}\,\in\, \varpi_*({\mathcal K}\otimes {\mathcal O}_{C_{g,d}}({\mathcal D}))
\otimes {\mathfrak g}$$ that defines an irreducible logarithmic connection on the trivial principal
$G$--bundle. The adjoint action of the center of $G$ on $\mathfrak g$ is trivial. The isotropy subgroup of
$[G,\, G]$ for the action of $[G,\, G]$ on $\textbf{w}$ is a finite subgroup of
$[G,\, G]$. The proposition follows from these.
\end{proof}

Let
\begin{equation}\label{e22}
\widehat{\theta}\, :\, {\mathbb T}(G)\, \longrightarrow\, {\mathcal R}_G(S'_0)\, ,
\end{equation}
be the restriction to ${\mathbb T}(G)\, \subset\, {\mathcal B}_G$ of the monodromy map $\theta$ in
\eqref{e18}. We are interested in the following question: When is the map $\widehat{\theta}$ an
immersion over an open dense subset of ${\mathbb T}(G)$?

\begin{remark}\label{rem-d1}
In \cite{BD} it was proved that $\widehat{\theta}$ is an immersion over an open dense subset of ${\mathbb T}(G)$, 
if $g\, \geq\, 2$, $d\,=\, 0$ and $\dim_{\mathbb C}G\, \geq\, 3$. From this it can be deduced that 
$\widehat{\theta}$ is an immersion over an open dense subset of ${\mathbb T}(G)$, if $g\, \geq\, 2$, $d\,=\,1$ and 
$\dim_{\mathbb C}G\, \geq\, 3$. To see this, first note that there is no logarithmic one-form on a compact Riemann 
surface $X$ with exactly one pole, because the residue has to be zero. So the space ${\mathbb T}(G)$ in \eqref{e22} 
for $d\,=\,1$ coincides with ${\mathbb T}(G)$ for $d\,=\, 0$. On the other hand, the natural map from the space 
${\mathcal R}_G(S'_0)$ in \eqref{e22} for $d\,=\, 0$ to the space ${\mathcal R}_G(S'_0)$ for $d\,=\, 1$ is an 
embedding; this natural map corresponds to restricting any flat $G$--connection on $S_0$ to the open subset 
$S'_0\,=\, S_0\setminus D_0$ of it. Therefore, we conclude that the map $\widehat{\theta}$ is an immersion over an 
open dense subset of ${\mathbb T}(G)$, if $g\, \geq\, 2$, $d\,=\, 1$ and $\dim_{\mathbb C}G\, \geq\, 3$.
\end{remark}

\subsection{The main theorem}

We first state a lemma of linear algebra which will be used in the proof of Theorem \ref{thm1}.

\begin{lemma}\label{lem3}
Let $\beta\, :\, V\, \longrightarrow\, W$ be a linear map between two finite dimensional
complex vector spaces. Let $S_1$ and $S_2$ be two subspaces of $V$ such that
\begin{enumerate}
\item ${\rm kernel}(\beta)\, \subset\, S_1$, and

\item the homomorphism $\beta\big\vert_{S_2}\, :\, S_2\, \longrightarrow\, W$ is injective.
\end{enumerate}
Then $\dim S_1\bigcap S_2\,=\, \dim \beta(S_1)\bigcap \beta(S_2)$.
\end{lemma}

\begin{proof}
Since $\beta\big\vert_{S_2}$ is injective, the restriction $\beta\big\vert_{S_1\cap S_2}$ is injective.
For any $v\, \in\, S_2$ with $\beta(v)\,\in\, \beta(S_1)$, there is an element $w\, \in\, S_1$
such that $\beta(v)\,=\, \beta(w)$. But then $v-w\, \in\, {\rm kernel}(\beta)\, \subset\, S_1$, and 
hence $v\, \in\, S_1$. Consequently, the restriction $\beta\big\vert_{S_1\cap S_2}$
is realized as an isomorphism between $S_1\bigcap S_2$ and $ \beta(S_1)\bigcap \beta(S_2)$.
\end{proof}

\begin{theorem}\label{thm1}
Assume that $3g-3+d\, >\, 0$ and $d\, \geq\, 1$.
The map $\widehat{\theta}$ in \eqref{e22} is an immersion over a nonempty
open dense subset of ${\mathbb T}(G)$ in the following two cases:
\begin{enumerate}
\item $g\, \geq\, 2$ and $\dim_{\mathbb C} G\, \geq \, d+2$;

\item $g\, =\, 1$ and $\dim_{\mathbb C} G\, \geq \, d$.
\end{enumerate}

The map $\widehat{\theta}$ in \eqref{e22} is nowhere an immersion in the following two cases:
\begin{enumerate}
\item $g\, =\, 0$;

\item $g\, \geq\, 1$ and $\dim_{\mathbb C}G \, <\, \frac{d+3g-3}{g}$.
\end{enumerate}
\end{theorem}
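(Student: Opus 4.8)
The plan is to recast the immersion property of $\widehat\theta$ as an infinitesimal transversality statement inside $T_z{\mathcal B}_G=\mathbb H^1(X,{\mathcal C}_\bullet)$ (Proposition \ref{prop1}), and then to split the two halves of the theorem according to whether ${\mathbb T}(G)$ is too big to carry an immersion. Fix $z=(X,\,D,\,E_G,\,\Phi)\in{\mathbb T}(G)$ with $E_G$ trivial. By Proposition \ref{prop1}(2) the kernel ${\mathcal I}_z$ of $d\theta$ is the image of $\Phi^C_*\colon H^1(X,\,TX(-D))\to\mathbb H^1(X,\,{\mathcal C}_\bullet)$. Since $\widehat{dp}\circ\Phi=\mathrm{Id}$ by \eqref{e8}, the projection $\Pi:=\widehat{dp}_*\colon \mathbb H^1(X,\,{\mathcal C}_\bullet)\to H^1(X,\,TX(-D))$ satisfies $\Pi\circ\Phi^C_*=\mathrm{Id}$; hence $\Phi^C_*$ is injective, $\dim{\mathcal I}_z=3g-3+d$, and $\Pi|_{{\mathcal I}_z}$ is an isomorphism. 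Because $\widehat\theta=\theta|_{{\mathbb T}(G)}$, it is an immersion at $z$ exactly when $d\widehat\theta_z$ is injective, i.e. when ${\mathcal I}_z\cap T_z{\mathbb T}(G)=0$.

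For the two non-immersion cases I would argue purely by dimensions. Combining Proposition \ref{prop2} with \eqref{dr} gives
\[
\dim{\mathbb T}(G)-\dim{\mathcal R}_G(S_0')=(3g-3+d)-g\dim_{\mathbb C}G\qquad(g\ge1),
\]
and the same difference equals $d-3$ when $g=0$. Under either stated hypothesis (namely $g=0$ with $d\ge4$, or $g\ge1$ with $\dim_{\mathbb C}G<\tfrac{d+3g-3}{g}$) this number is strictly positive, so $\dim{\mathbb T}(G)>\dim{\mathcal R}_G(S_0')$ and $d\widehat\theta_z$ has nonzero kernel at \emph{every} $z$. Thus $\widehat\theta$ is nowhere an immersion, which settles the second half.

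The immersion cases carry the content. Since ${\mathcal I}_z\cap T_z{\mathbb T}(G)=0$ is an open condition on $z$ and ${\mathbb T}(G)$ is connected, it suffices to verify it at one suitably generic point. The tool is the infinitesimal bundle-deformation map $\rho\colon\mathbb H^1(X,\,{\mathcal C}_\bullet)\to H^1(X,\,\text{ad}(E_G))$, obtained by composing the natural map $\mathbb H^1(X,\,{\mathcal C}_\bullet)\to H^1(X,\,{\mathcal C}_0)=H^1(X,\,\text{At}(E_G)(-\log D))$ with $\widetilde\Phi_*$, where $\widetilde\Phi$ is the splitting in \eqref{e11}; one checks that $\ker\rho$ is precisely $T_z{\mathbb T}(G)$, the locus where the bundle remains trivial. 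Applying Lemma \ref{lem3} with $\beta=\rho$, $S_1=T_z{\mathbb T}(G)$ and $S_2={\mathcal I}_z$ (hypothesis (1) holds since $\ker\rho\subseteq S_1$, whence $\rho(S_1)=0$) reduces the transversality to hypothesis (2): that $\rho|_{{\mathcal I}_z}$ be injective. On the trivial bundle $\text{ad}(E_G)=\mathcal O_X\otimes{\mathfrak g}$ and $\Phi$ is the trivial connection twisted by a logarithmic form $A\in H^0(X,\,K_X(D))\otimes{\mathfrak g}$, and a computation identifies $\rho\circ\Phi^C_*$ with cup product against $A$,
\[
\mu_A\colon H^1(X,\,TX(-D))\longrightarrow H^1(X,\,\mathcal O_X)\otimes{\mathfrak g},\qquad \eta\longmapsto \eta\cup A,
\]
via the contraction $TX(-D)\otimes K_X(D)\to\mathcal O_X$.

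The main obstacle, then, is the generic injectivity of $\mu_A$. Writing $A=\sum_i a_i\otimes\xi_i$ with $\xi_i\in{\mathfrak g}$ linearly independent (so $\dim_{\mathbb C}G$ must be large enough to supply them) and $a_i\in H^0(X,\,K_X(D))$, injectivity of $\mu_A$ is equivalent to $\bigcap_i\ker(\,\cdot\cup a_i)=0$ in $H^1(X,\,TX(-D))$, which by Serre duality is in turn equivalent to surjectivity of the multiplication map $H^0(X,\,K_X)\otimes\langle a_1,\dots,a_r\rangle\to H^0(X,\,K_X^{2}(D))$. For $g=1$ this is transparent since $H^0(X,\,K_X)=\mathbb C$ forces one to use all $d=\dim H^0(X,\,K_X(D))$ sections, giving exactly the bound $\dim_{\mathbb C}G\ge d$. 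For $g\ge2$ the point is to show that $d+2$ \emph{general} sections already make this multiplication map surjective; I expect this to follow from the Gieseker–Lazarsfeld maximal-rank technique (the planned multiplication-map lemma), with the range $g\ge3$ and the genus-two (hyperelliptic) case handled separately — this low-genus splitting is where I anticipate the real difficulty. A generic such $A$ is moreover irreducible, so the chosen $z$ lies in ${\mathbb T}(G)$, and openness of the injectivity condition propagates the immersion to a dense open subset.
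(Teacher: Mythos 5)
Your overall architecture is the paper's own (dimension count for the non-immersion cases; reduction of the immersion property to ${\mathcal I}(z)\cap T_z{\mathbb T}(G)=0$; projection onto bundle-deformations; identification with cup product against the $\mathfrak g$-valued logarithmic form; Serre duality and a multiplication-map lemma), but there is a genuine error at the central step. You define $\rho$ by composing ${\mathbb H}^1(X,{\mathcal C}_\bullet)\to H^1(X,\mathrm{At}(E_G)(-\log D))$ with $\widetilde\Phi_*$, where $\widetilde\Phi$ is the splitting of \eqref{e11}, i.e.\ the splitting determined by $\Phi$ itself. That choice annihilates exactly the wrong subspace: by Proposition \ref{prop1}(2) and the commutativity of \eqref{e15a} with \eqref{e24b} one has ${\mathcal H}_*\circ\Phi^C_*=\Phi_*$, while $\widetilde\Phi\circ\Phi=0$ because $\ker\widetilde\Phi=\Phi(TX(-D))$ by definition. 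Hence your $\rho$ vanishes identically on ${\mathcal I}(z)=\Phi^C_*(H^1(X,TX(-D)))$, so hypothesis (2) of Lemma \ref{lem3} (injectivity of $\rho|_{{\mathcal I}(z)}$) can never hold, and your claimed computation ``$\rho\circ\Phi^C_*=$ cup product with $A$'' is false (that composite is zero). Likewise $\ker\rho={\mathcal H}_*^{-1}(\Phi_*H^1(X,TX(-D)))={\mathcal I}(z)+\ker{\mathcal H}_*$, which can equal $T_z{\mathbb T}(G)$ only if ${\mathcal I}(z)\subseteq T_z{\mathbb T}(G)$ --- precisely the negation of the transversality you are trying to prove.

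The fix is to project with the splitting coming from the trivialization, i.e.\ from the trivial connection $\Phi_0$ as in \eqref{e27}: writing $\Phi=\Phi_0+\delta$ as in \eqref{e28}, the projection $\rho_0$ of ${\mathcal H}_*$ onto the $H^1(X,\mathrm{ad}(E_G))$ factor satisfies $\ker\rho_0=T_z{\mathbb T}(G)$ (this is the paper's computation of ${\mathcal H}_*(T_z{\mathbb T}(G))$ combined with $\ker{\mathcal H}_*\subseteq T_z{\mathbb T}(G)$), and $\rho_0\circ\Phi^C_*=\widehat\delta_*$, cup product with $\delta$ --- this is exactly the content of \eqref{e31} and \eqref{e33}. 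With that correction your streamlined argument works (you then do not even need Lemma \ref{lem3}: $T_z{\mathbb T}(G)\cap{\mathcal I}(z)\subseteq\ker(\rho_0|_{{\mathcal I}(z)})$ suffices), and everything downstream --- Serre duality, the $g=1$ case, the reduction for $g\geq2$ to finding a $(d+2)$-dimensional subspace of $H^0(X,K_X(D))$ with surjective multiplication --- coincides with the paper. Two further caveats: the surjectivity statement you defer to ``Gieseker--Lazarsfeld'' is the real content of the paper's Lemmas \ref{lem4} and \ref{lem5} (Green's Theorem (4.e.1), non-hyperellipticity for $g\geq3$, and a separate genus-2 argument under $D\notin|K_X|$), so as written it remains unproved; and density of the immersion locus follows from holomorphy of $\widehat\theta$ together with irreducibility of ${\mathbb T}(G)$, not from openness plus connectedness alone.
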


\begin{proof}
First assume that $g\, =\, 0$. The trivial holomorphic principal $G$--bundle over ${\mathbb C}{\mathbb P}^1$
is rigid \cite{Ra}, \cite{Ha}. In other words, in any holomorphic family of holomorphic
principal $G$--bundle over ${\mathbb C}{\mathbb P}^1$, parametrized by a complex manifold $Z$, the locus of
points of $Z$ over which the principal $G$--bundle on ${\mathbb C}{\mathbb P}^1$ is holomorphically
trivial is an open subset of $Z$. Therefore, the map $\widehat{\theta}$ in \eqref{e22} is nowhere an immersion.
We note that this also follows from the fact that
$$
\dim {\mathbb T}(G)- {\mathcal R}_G(S'_0)\,=\, d-3\, >\, 0
$$
if $g\,=\, 0$ (see \eqref{dr} and Proposition \ref{prop2}).

So we assume that $g\, \geq\,1$.

If $g\, \geq\, 1$ and $\dim_{\mathbb C}G \, <\, \frac{d+3g-3}{g}$, then from \eqref{dr} and Proposition \ref{prop2}
we have
$$
\dim {\mathbb T}(G)- {\mathcal R}_G(S'_0)\,=\, 3g-3+d- g\cdot\dim_{\mathbb C}G\, >\, 0\, .
$$
Hence the map $\widehat{\theta}$ in \eqref{e22} is nowhere an immersion in this case also.

So we assume that at least one of the following two holds:
\begin{enumerate}
\item $g\, \geq\, 2$ and $\dim_{\mathbb C} G\, \geq \, d+2$;

\item $g\, =\, 1$ and $\dim_{\mathbb C} G\, \geq \, d$.
\end{enumerate}

The map $\widehat{\theta}$ in \eqref{e22} is an immersion over the subset of 
${\mathbb T}(G)$ over which the homomorphism
$$
\bigwedge\nolimits^e d\widehat{\theta}\, :\, \bigwedge\nolimits^e
T{\mathbb T}(G) \,\longrightarrow\, \widehat{\theta}^*\bigwedge\nolimits^e T{\mathcal R}_G(S'_0)
$$
is fiber-wise nonzero, where $e\,=\, \dim_{\mathbb C}{\mathbb T}(G)$ and
$d\widehat{\theta}$ is the differential of the map $\widehat{\theta}$. Therefore, to prove the theorem,
it suffices to show that there is a point $z\, \in\, {\mathbb T}(G)$ such that the differential
at $z$
\begin{equation}\label{y1}
d\widehat{\theta} (z)\, :\, 
T_z{\mathbb T}(G) \,\longrightarrow\, T_{\widehat{\theta}(z)} {\mathcal R}_G(S'_0)
\end{equation}
is injective; recall that ${\mathbb T}(G)$ is irreducible.

Take a point
\begin{equation}\label{z}
z\, =\, (X,\, D,\, E_G,\, \Phi)\ \in\, {\mathbb T}(G)\, .
\end{equation}
We recall that ${\mathcal I}(z)\, =\, \text{kernel}((d\theta) (z))$
(see \eqref{e19}). The homomorphism $d\widehat{\theta} (z)$ (see \eqref{y1}) is injective
if and only if
\begin{equation}\label{e23}
{\mathcal I}(z)\cap T_z{\mathbb T}(G)\,=\, 0\, ;
\end{equation}
note that both ${\mathcal I}(z)$ and $T_z{\mathbb T}(G)$ are subspaces of the tangent space
$T_z{\mathcal B}_G$.

We will use Lemma \ref{lem3} to prove that \eqref{e23} holds when $z$ is chosen
suitably.

We recall from Proposition \ref{prop1}(1) that
$T_z{\mathcal B}_G\,=\, {\mathbb H}^1(X,\, {\mathcal C}_{\bullet})$, where
${\mathcal C}_\bullet$ is the complex in \eqref{e14}. Also, recall that the
infinitesimal deformations of the triple $(X,\, D,\, E_G)$ are
parametrized by $H^1(X,\, \text{At}(E_G)(-\log D))$. Let
\begin{equation}\label{e24}
\rho\, :\, {\mathbb H}^1(X,\, {\mathcal C}_{\bullet})\, \longrightarrow\,
H^1(X,\, \text{At}(E_G)(-\log D))
\end{equation}
be the forgetful map that sends any infinitesimal deformation of the quadruple
$$z\,=\, (X,\, D,\, E_G,\, \Phi)$$ in \eqref{z} to the infinitesimal deformation of the triple
$(X,\, D,\, E_G)$ obtained from it by simply forgetting the logarithmic connection. We shall describe $\rho$
explicitly.

Let ${\mathcal A}_\bullet$ be the one-term complex with $\text{At}(E_G)(-\log D)$ at the $0$-th position.

Consider the homomorphism ${\mathcal H}$ of complexes
\begin{equation}\label{e24b}
\begin{matrix}
{\mathcal C}_\bullet &: & \text{At}(E_G)(-\log D) & \stackrel{\widehat{\Phi}}{\longrightarrow} &
\text{ad}(E_G)\otimes K_X(D)\\
{\mathcal H}\Big\downarrow\,\,\,\,\,\,\, && \Vert &&\Big\downarrow\\
{\mathcal A}_\bullet & : & \text{At}(E_G)(-\log D) &\longrightarrow & 0
\end{matrix}
\end{equation}
where ${\mathcal C}_\bullet$ is the complex in \eqref{e14}. Let
\begin{equation}\label{e25}
{\mathcal H}_*\, :\, {\mathbb H}^1(X,\, {\mathcal C}_{\bullet})\, \longrightarrow\,
{\mathbb H}^1(X,\, {\mathcal A}_\bullet)
\,=\, H^1(X,\, \text{At}(E_G)(-\log D))
\end{equation}
be the homomorphism of hypercohomologies induced by this homomorphism of complexes. Then
the homomorphism $\rho$ in \eqref{e24} coincides with ${\mathcal H}_*$ in \eqref{e25}.

In Lemma \ref{lem3}, set $V\,=\, {\mathbb H}^1(X,\, {\mathcal C}_{\bullet})$, $W\,=\,
H^1(X,\, \text{At}(E_G)(-\log D))$, $\beta\,=\,{\mathcal H}_*$ (see \eqref{e25}), $S_1\,=\, T_z{\mathbb T}(G)$
(see \eqref{e21}) and
$S_2\,=\, {\mathcal I}(z)$ (see \eqref{e19}).

We will show that the hypotheses in Lemma \ref{lem3} are satisfied.

\begin{proposition}\label{prop3}
For the above data, the two conditions in Lemma \ref{lem3} hold.
\end{proposition}

\begin{proof}[{Proof of Proposition \ref{prop3}}]
The first condition in Lemma \ref{lem3} says that
\begin{equation}\label{ls1}
\text{kernel}({\mathcal H}_*)\, \subset\, T_z{\mathbb T}(G)\, .
\end{equation} 
To prove \eqref{ls1}, we will identify the kernel of ${\mathcal H}_*$. For this, observe that the
homomorphism of complexes ${\mathcal H}$ in \eqref{e24b} fits in the following short exact sequence of complexes:
$$
\begin{matrix}
0 && 0&& 0\\
\Big\downarrow &&\Big\downarrow &&\Big\downarrow\\
{\mathcal A}'_\bullet &: & 0 &\longrightarrow & \text{ad}(E_G)\otimes K_X(D)\\
\Big\downarrow &&\Big\downarrow&& \Vert \\
{\mathcal C}_\bullet &: & \text{At}(E_G)(-\log D) & \stackrel{\widehat{\Phi}}{\longrightarrow} &
\text{ad}(E_G)\otimes K_X(D)\\
{\mathcal H}\Big\downarrow\,\,\,\,\,\,\, && \Vert &&\Big\downarrow\\
{\mathcal A}_\bullet & : & \text{At}(E_G)(-\log D) &\longrightarrow & 0\\
\Big\downarrow&& \Big\downarrow && \Big\downarrow\\
0 && 0 && 0
\end{matrix}
$$
This short exact sequence of complexes yields the following long exact sequence of hypercohomologies:
$$
\longrightarrow\, {\mathbb H}^1(X,\, {\mathcal A}'_\bullet)\,=\,
H^0(X,\, \text{ad}(E_G)\otimes K_X(D)) \, \stackrel{\nu}{\longrightarrow}\,
{\mathbb H}^1(X,\, {\mathcal C}_{\bullet})
$$
$$
\stackrel{{\mathcal H}_*}{\longrightarrow}\, 
{\mathbb H}^1(X,\, {\mathcal A}_\bullet)\,=\,
H^1(X,\, \text{At}(E_G)(-\log D)) \, \longrightarrow \cdots \, .
$$
The above homomorphism $\nu$ corresponds to moving the holomorphic connection on the
trivializable holomorphic principal $G$--bundle $E_G$, keeping the triple $(X,\, D,\, E_G)$
fixed. This immediately implies that \eqref{ls1} holds.

The second condition in Lemma \ref{lem3} says that the restriction of the
homomorphism ${\mathcal H}_*$ to ${\mathcal I}(z)$
\begin{equation}\label{ei}
{\mathcal H}_*\big\vert_{{\mathcal I}(z)}\, :\,
{\mathcal I}(z)\, \longrightarrow\, H^1(X,\, \text{At}(E_G)(-\log D))
\end{equation}
is injective.

To prove that the homomorphism in \eqref{ei} is injective, from \eqref{spi} we conclude that
${\mathcal H}_*\big\vert_{{\mathcal I}(z)}$
is injective if the composition of homomorphisms
\begin{equation}\label{ei2}
H^1(X,\, TX(-D)) \, \stackrel{{\mathbb L}(z)}{\longrightarrow}\, {\mathbb H}^1(X,\, {\mathcal C}_{\bullet})
\, \stackrel{{\mathcal H}_*}{\longrightarrow}\, H^1(X,\, \text{At}(E_G)(-\log D))
\end{equation}
is injective, where ${\mathbb L}(z)$ is the homomorphism in \eqref{e20}. From Proposition \ref{prop1}(2)
we know that ${\mathbb L}(z)\,=\,\Phi^C_*$. Therefore, the composition of homomorphisms in \eqref{ei2}
coincides with the homomorphism of cohomologies
$$
\Phi_*\, :\, H^1(X,\, TX(-D)) \, \longrightarrow\, H^1(X,\, \text{At}(E_G)(-\log D))
$$
induced by the logarithmic connection $\Phi\, :\, TX(-D)\, \longrightarrow\, \text{At}(E_G)(-\log D)$
in \eqref{z}. But from the definition of a logarithmic connection we know that
\begin{itemize}
\item the homomorphism $\Phi$ is fiber-wise injective, and

\item $\Phi(TX(-D))$ is a direct summand of $\text{At}(E_G)(-\log D)$.
\end{itemize}
Consequently, the above homomorphism $\Phi_*$ is injective.
Hence the composition of homomorphisms in \eqref{ei2} is injective. This implies that the
homomorphism in \eqref{ei} is injective. This completes the proof of Proposition \ref{prop3}.
\end{proof}

Continuing with the proof of Theorem \ref{thm1}, in view of Proposition \ref{prop3}, from
Lemma \ref{lem3} we conclude that the statement in \eqref{e23} is equivalent to the
following statement:
\begin{equation}\label{e26}
{\mathcal H}_*({\mathcal I}(z))\cap {\mathcal H}_*(T_z{\mathbb T}(G))\,=\, 0\, ,
\end{equation}
where ${\mathcal H}_*$ is the homomorphism in \eqref{e25}.

Fix a holomorphic trivialization of the principal $G$--bundle $E_G$ in \eqref{z}. Using
it we will identify $E_G$
with the trivial holomorphic principal $G$--bundle $X\times G\, \longrightarrow\, X$. So
$\text{ad}(E_G)$ is the trivial holomorphic vector bundle $X\times
{\mathfrak g}\, \longrightarrow\, X$, where ${\mathfrak g}$ is the Lie algebra of $G$, and also
\begin{equation}\label{e27}
\text{At}(E_G)(-\log D)\,=\, \text{ad}(E_G)\oplus TX(-D)\,=\, X\times
{\mathfrak g} \oplus TX(-D)\, .
\end{equation}

Let $\Phi_0$ be the trivial logarithmic (in fact it is holomorphic) connection on the trivial 
holomorphic principal $G$--bundle $X\times G \, \longrightarrow\, X$. Note that the trivial
holomorphic connection on $E_G$ does not depend on the choice of the trivialization of $E_G$.
The homomorphism 
$$TX(-D)\, \longrightarrow\, \text{At}(E_G)(-\log D)$$ that defines $\Phi_0$ coincides with the 
inclusion map $$TX(-D)\, \hookrightarrow\, \text{ad}(E_G)\oplus TX(-D)\,=\, 
\text{At}(E_G)(-\log D)$$ (see \eqref{e27}). So we have
\begin{equation}\label{e28}
\Phi\,=\, \Phi_0+ \delta\, ,
\end{equation}
where $$\delta\, \in\, H^0(X,\, K_X(D)\otimes {\mathfrak g})\,=\, H^0(X,\, K_X(D))\otimes
{\mathfrak g}\, ;$$ recall that $\text{ad}(E_G)\,=\, X\times\mathfrak g$.

Consider the infinitesimal deformations of the triple $(X,\, D,\, E_G)$ in \eqref{z} such the
principal $G$--bundle remains trivial, but the pair $(X,\, D)$
moves. These correspond to the image of the homomorphism
$$
H^1(X,\, TX(-D))\, \longrightarrow\, H^1(X,\, \text{At}(E_G)(-\log D))\,=\,
H^1(X,\, \text{ad}(E_G))\oplus H^1(X,\, TX(-D))
$$
(see \eqref{e27} for the decomposition) given by the identity map of $H^1(X,\, TX(-D))$ and the zero map of $H^1(X,\,
TX(-D))$ to $H^1(X,\, \text{ad}(E_G))$. In other words, these correspond to the image of the
homomorphism of cohomologies
$$H^1(X,\, TX(-D))\, \longrightarrow\, H^1(X,\, \text{At}(E_G)(-\log D))$$ induced by the inclusion map
$TX(-D)\, \hookrightarrow\, \text{ad}(E_G)\oplus TX(-D)$ which is defined using \eqref{e27}.

Consequently, the subspace in \eqref{e26}
$$
{\mathcal H}_*(T_z{\mathbb T}(G))\, \subset\, H^1(X,\, \text{At}(E_G)(-\log D))\,=\,
H^1(X,\, \text{ad}(E_G))\oplus H^1(X,\, TX(-D))
$$
coincides with the subspace
$$0\oplus H^1(X,\, TX(-D))\,=\, H^1(X,\, TX(-D))\, \subset\,
H^1(X,\, \text{At}(E_G)(-\log D))
$$
$$
=\, H^1(X,\, \text{ad}(E_G))\oplus H^1(X,\, TX(-D))
\, .$$

Consider the section $\delta$ in \eqref{e28}. Using the natural duality pairing
$$TX(-D)\otimes K_X(D)\, \longrightarrow\, {\mathcal O}_X$$ it produces a homomorphism
\begin{equation}\label{e29a}
\widehat{\delta}\, :\, TX(-D)\, \, \longrightarrow\, {\mathcal O}_X\otimes {\mathfrak g}\,=\,
\text{ad}(E_G)\, .
\end{equation}
Let
\begin{equation}\label{e29}
\widehat{\delta}_*\, :\, H^1(X,\, TX(-D))\, \longrightarrow\, H^1(X,\, {\mathcal O}_X)
\otimes {\mathfrak g}\,=\, H^1(X,\, \text{ad}(E_G))
\end{equation}
be the homomorphism of cohomologies induced by $\widehat{\delta}$ in \eqref{e29a}.

We will now show that the subspace in \eqref{e26}
$$
{\mathcal H}_*({\mathcal I}(z))\, \subset\, H^1(X,\, \text{At}(E_G)(-\log D))
\,=\, H^1(X,\, \text{ad}(E_G))\oplus H^1(X,\, TX(-D))
$$
coincides with the subspace
$$
\{(\widehat{\delta}_*(v),\, v)\, \mid\, v\, \in\, H^1(X,\, TX(-D))\}\, \subset\,
H^1(X,\, \text{ad}(E_G))\oplus H^1(X,\, TX(-D))\, ,
$$
where $\widehat{\delta}_*$ is the homomorphism in \eqref{e29}.

To prove this, let
\begin{equation}\label{i}
\iota_{0*}\, :\, H^1(X,\, \text{ad}(E_G))\, \longrightarrow\, H^1(X,\, \text{At}(E_G)(-\log D))
\end{equation}
be the homomorphism of cohomologies induced by the homomorphism $\iota_0$ of sheaves in
\eqref{e6}. We note that $\iota_{0*}$ coincides with the natural map that sends any infinitesimal deformation of
$E_G$ (keeping $(X,\, D)$ fixed) to the corresponding infinitesimal deformation of $(X,\, D,\, E_G)$
where only $E_G$ is moving.

Consider the homomorphism $\Phi^C$ in \eqref{e15}
constructed from the connection $\Phi$. Let
\begin{equation}\label{z2}
\Phi^{0,C} \, :\,TX(-D)\, \longrightarrow\, {\mathcal C}^0_{\bullet}
\end{equation}
be the homomorphism as in \eqref{e15} 
constructed for the trivial connection $\Phi_0$ on $E_G$; here ${\mathcal C}^0_{\bullet}$ is the
complex as in \eqref{e14} for the trivial connection $\Phi_0$. From \eqref{e28} it follows
immediately that
\begin{equation}\label{e30}
{\mathcal H}\circ\Phi^C- {\mathcal H}^0\circ\Phi^{0,C}\,=\, \iota_{0}\circ\widehat{\delta}\, ,
\end{equation}
where $\widehat{\delta}$, ${\mathcal H}$ and $\iota_{0}$ are the homomorphisms in
\eqref{e29a}, \eqref{e24b} and \eqref{e6} respectively, while
${\mathcal H}^0$ is the homomorphism for the trivial holomorphic connection $\Phi_0$ constructed
as in \eqref{e24b} (by substituting $\Phi_0$ in place of $\Phi$ in the
construction of ${\mathcal H}$). As in Proposition \ref{prop1}(2), let
\begin{equation}\label{e32}
\Phi^{0,C}_* \, :\, H^1(X,\, TX(-D))
\, \longrightarrow\, {\mathbb H}^1(X,\, {\mathcal C}^0_{\bullet})
\end{equation}
be the homomorphism of hypercohomologies induced by $\Phi^{0,C}$ in \eqref{z2}. From \eqref{e30}
we conclude that
\begin{equation}\label{e31}
{\mathcal H}_*\circ\Phi^C_* - {\mathcal H}^0_*\circ\Phi^{0,C}_*\,=\, \iota_{0*}\circ \widehat{\delta}_*\, ,
\end{equation}
where $\widehat{\delta}_*$, ${\mathcal H}_*$, $\Phi^{0,C}_*$, $\iota_{0*}$ and $\Phi^C_*$
are the homomorphisms in \eqref{e29},
\eqref{e25}, \eqref{e32}, \eqref{i} and Proposition \ref{prop1}(2) respectively, and
\begin{equation}\label{nh1}
{\mathcal H}^0_*\, :\,
{\mathbb H}^1(X,\, {\mathcal C}^0_{\bullet})\, \longrightarrow\, H^1(X,\, \text{At}(E_G)(-\log D))
\end{equation}
is the homomorphism of hypercohomologies induced by the homomorphism ${\mathcal H}^0$ in \eqref{e30}.
Note that both sides of \eqref{e31} are actually
homomorphisms from $H^1(X,\, TX(-D))$ to 
$H^1(X,\, \text{At}(E_G)(-\log D)).$ Also, note that from
the decomposition in \eqref{e27} it follows immediately that the homomorphism
$\iota_{0*}$ in \eqref{e31} is injective. In fact, the decomposition in \eqref{e27} realizes
$H^1(X,\, \text{ad}(E_G))$ as a direct summand of $H^1(X,\, \text{At}(E_G)(-\log D))$.

Now consider the homomorphism $$\mathbb{L}(z)\, :\, T_{(X,D)}{\mathcal T}_{g,d} \, \longrightarrow\,
T_z {\mathcal B}_G$$ in \eqref{e20} constructed for the connection $\Phi$ in the expression of $z$ in
\eqref{z}. Let
$$
\mathbb{L}^0\, :\, T_{(X,D)}{\mathcal T}_{g,d} \, \longrightarrow\, T_{(X,D,E_G,\Phi_0)}{\mathcal B}_G
$$
be the homomorphism as in \eqref{e20} constructed for the trivial connection $\Phi_0$. From
Proposition \ref{prop1}(2) we know that
\begin{equation}\label{z3}
{\mathcal H}_*\circ\mathbb{L} - {\mathcal H}^0_*\circ \mathbb{L}^0\,=\,
{\mathcal H}_*\circ\Phi^C_* -{\mathcal H}^0_*\circ\Phi^{0,C}_*\, ,
\end{equation}
where ${\mathcal H}_*$ and ${\mathcal H}^0_*$ are the homomorphisms in \eqref{e25} and \eqref{nh1}
respectively;
recall that $T_z {\mathcal B}_G\,=\, {\mathbb H}^1(X,\, {\mathcal C}_{\bullet})$ and
$T_{(X,D,E_G,\Phi_0)}{\mathcal B}_G\,=\, {\mathbb H}^1(X,\, {\mathcal C}^0_{\bullet})$.

Combining \eqref{e31} and \eqref{z3} it follows that
\begin{equation}\label{e33}
{\mathcal H}_*\circ\mathbb{L} - {\mathcal H}^0_*\circ \mathbb{L}^0\,=\, \iota_{0*}\circ \widehat{\delta}_*\, ,
\end{equation}
where $\widehat{\delta}_*$ is the homomorphism in \eqref{e29}.
It was noted earlier that the decomposition in \eqref{e27} realizes
$H^1(X,\, \text{ad}(E_G))$ as a direct summand of $H^1(X,\, \text{At}(E_G)(-\log D))$.

Therefore, from \eqref{e33} and \eqref{spi} we conclude that the subspace in \eqref{e26}
$$
{\mathcal H}_*({\mathcal I}(z))\, \subset\, H^1(X,\, \text{At}(E_G)(-\log D))
\,=\, H^1(X,\, \text{ad}(E_G))\oplus H^1(X,\, TX(-D))
$$
(see \eqref{e27} for the above decomposition) coincides with the subspace
$$
\{(\widehat{\delta}_*(v),\, v)\, \mid\, v\, \in\, H^1(X,\, TX(-D))\}\, \subset\,
H^1(X,\, \text{ad}(E_G))\oplus H^1(X,\, TX(-D))\, .
$$
On the other hand, it was shown earlier the subspace in \eqref{e26}
$$
{\mathcal H}_*(T_z{\mathbb T}(G))\, \subset\, H^1(X,\, \text{At}(E_G)(-\log D))\,=\,
H^1(X,\, \text{ad}(E_G))\oplus H^1(X,\, TX(-D))
$$
coincides with the subspace
$$0\oplus H^1(X,\, TX(-D))\,=\, H^1(X,\, TX(-D))\, \subset\,
H^1(X,\, \text{ad}(E_G))\oplus H^1(X,\, TX(-D))\, .$$ Combining these two
we obtain an isomorphism
\begin{equation}\label{e34}
\eta\, :\, \text{kernel}(\widehat{\delta}_*)
\, \stackrel{\sim}{\longrightarrow}\,
{\mathcal H}_*({\mathcal I}(z))\cap{\mathcal H}_*(T_z{\mathbb T}(G))
\end{equation}
that sends any $v\, \in\, \text{kernel}(\widehat{\delta}_*)\, \subset\, H^1(X,\, TX(-D))$ to
$$
(0,\, v)\, \in\, H^1(X,\, \text{ad}(E_G))\oplus H^1(X,\, TX(-D))\,=\,
H^1(X,\, \text{At}(E_G)(-\log D))\, .
$$

Consequently, \eqref{e26} holds if and only if we have
\begin{equation}\label{e26b}
\text{kernel}(\widehat{\delta}_*)\,=\,0\, ,
\end{equation}
where $\widehat{\delta}_*$ is the homomorphism constructed in \eqref{e29}.

Take any subspace
$$
V\, \subset\, H^0(X,\, K_X(D))\, .
$$
Let $H^1(X,\, TX(-D))\otimes V\, \longrightarrow\, H^1(X,\, {\mathcal O}_X)$ be the
homomorphism constructed using the duality pairing $TX(-D)\otimes K_X(D)\, \longrightarrow\,
{\mathcal O}_X$. Let
\begin{equation}\label{q1}
F_V\, :\, H^1(X,\, TX(-D))\, \longrightarrow\, H^1(X,\, {\mathcal O}_X)\otimes V^*
\end{equation}
be the homomorphism given by it. From the construction of $\widehat{\delta}_*$ in
\eqref{e29} we see that
$$
\text{kernel}(\widehat{\delta}_*)\,=\,\text{kernel}(F_V)\, ,
$$
where $V\, \subset\, H^0(X,\, K_X(D))$ is the image of the homomorphism
\begin{equation}\label{q3}
H_\delta\, :\, {\mathfrak g}^*\, \longrightarrow\, H^0(X,\, K_X(D))
\end{equation}
given by $\delta$ in \eqref{e28}; note that since
$\delta\, \in\, H^0(X,\, K_X(D))\otimes {\mathfrak g}$, it produces a
homomorphism $H_\delta$ as in \eqref{q3} by sending any $w\,\in\, {\mathfrak g}^*$
to $w(\delta)\, \in\, H^0(X,\, K_X(D))$. Consequently, \eqref{e26b} holds if
and only if
\begin{equation}\label{q2}
\text{kernel}(F_{H_\delta({\mathfrak g}^*)})\,=\, 0\, ,
\end{equation}
where $H_\delta$ and $F_{H_\delta({\mathfrak g}^*)}$ are the homomorphism constructed in \eqref{q3}
and \eqref{q1} respectively.

It is evident that there is an element
$z\, =\, (X,\, D,\, E_G,\, \Phi)\ \in\, {\mathbb T}(G)$ such that \eqref{q2} holds if and
only if there is a subspace $V\, \subset\, H^0(X,\, K_X(D))$, with $\dim V\,\leq\, \dim \mathfrak g$,
satisfying the condition that the homomorphism $F_V$ in \eqref{q1} is injective. Indeed, choosing a
homomorphism $$\delta'\, :\, {\mathfrak g}^*\, \longrightarrow\, H^0(X,\, K_X(D))$$ for which
$V\, \subset\, \delta'({\mathfrak g}^*)$, consider the element $\delta\, \in\, H^0(X,\, K_X(D))\otimes
{\mathfrak g}$ given by $\delta'$. Then the
logarithmic connection $(X,\, D,\, X\times G, \, \Phi_0+\delta)$ satisfies \eqref{q2}, where $\Phi_0$ is
the trivial holomorphic connection on $X\times G\, \longrightarrow\, X$.

First assume that $g\, =\, 1$ (hence, by hypothesis, $d\, \geq \, 1$) and $\dim_{\mathbb C} G\, \geq \, d$. This 
implies that
$$
\dim H^0(X,\, K_X(D))\,=\, d\,\leq\, \dim_{\mathbb C} G\, .
$$
So in this case there is a subspace $V\, \subset\, H^0(X,\, K_X(D))$, with $\dim V\,
\leq\, \dim \mathfrak g$, for which the homomorphism $F_V$ in \eqref{q1} is injective,
if the homomorphism
\begin{equation}\label{j1}
F_{H^0(X,K_X(D))}\, :\, H^1(X,\, TX(-D))\, \longrightarrow\, H^1(X,\, {\mathcal O}_X)\otimes
H^0(X,\, K_X(D))^*
\end{equation}
is injective; if the homomorphism in \eqref{j1} is injective, then we may take
$V$ to be $H^0(X,\, K_X(D))$ itself and the homomorphism $F_V$ is injective.

The homomorphism in \eqref{j1} is injective if the dual homomorphism
\begin{equation}\label{j2}
F^*_{H^0(X,K_X(D))}\, :\, H^0(X,\, K_X)\otimes H^0(X,\, K_X(D))\, \longrightarrow\,
H^0(X,\, K^{\otimes 2}_X(D))
\end{equation}
is surjective. Now the homomorphism in \eqref{j2} is injective because 
$\dim H^0(X,\, K_X)\,=\, 1$. On the other hand, we have
$$
\dim H^0(X,\, K_X(D))\,=\, d\,=\, \dim H^0(X,\, K^{\otimes 2}_X(D))\, ,
$$
so the homomorphism in \eqref{j2} is an isomorphism, in particular, it is
surjective. This proves the theorem when
$g\, =\, 1$ and $\dim_{\mathbb C} G\, \geq \, d$.

Now assume that $g\, \geq\, 2$ and $\dim_{\mathbb C} G\, \geq \, d+2$.

Since $\dim\mathfrak g\, \geq\, d+2$, we conclude that there is an element
$z\, =\, (X,\, D,\, E_G,\, \Phi)\ \in\, {\mathbb T}(G)$ such that \eqref{q2} holds
if there is a subspace $V\, \subset\, H^0(X,\, K_X(D))$, with $\dim V\,
\leq\, d+2$, for which the homomorphism $F_V$ in \eqref{q1} is injective. From
Lemma \ref{lem4} and Lemma \ref{lem5} (see also Remark \ref{rem3}) it follows that such a
subspace $V$ exists. This completes the proof of the theorem.
\end{proof}

\begin{remark}\label{gd0}
{}From Theorem \ref{thm1} it follows that when $g\,=\,1$ and $d\,=\, 0$, the map
$\widehat{\theta}$ in \eqref{e22} is an immersion over a nonempty
open dense subset of ${\mathbb T}(G)$. Indeed, from Remark \ref{rem-d1} we know that
${\mathbb T}(G)$ for $d\,=\,0$ coincides with ${\mathbb T}(G)$ for $d\,=\,1$. On the other hand,
${\mathcal R}_G(S'_0)$ for $d\,=\,0$ is embedded into ${\mathcal R}_G(S'_0)$ for $d\,=\,1$. From
Theorem \ref{thm1} we know that the map
$\widehat{\theta}$ in \eqref{e22} is an immersion over a nonempty
open dense subset of ${\mathbb T}(G)$ if $g\,=\,1$ and $d\,=\, 1$. Therefore, the same holds when
$g\,=\,1$ and $d\,=\, 0$. Recall that $\dim G\, >\, 0$.
\end{remark}

In view of Remark \ref{rem-d1}, we assume that $d\, >\, 1$ when $g\, >\, 1$.

\begin{lemma}\label{lem4}
Take integers $g\, >\, 1$ and $d\, >\, 1$. Then for any compact connected non--hyperelliptic Riemann
surface $X$ of genus $g\, \geq\, 3$, and any effective divisor $D$ on $X$ of degree $d$, there exists a subspace
$W\, \subset\, H^0(X,\, K_X(D))$, with $\dim W\,=\,d+2$,
such that the homomorphism constructed in \eqref{q1}
$$F_W\, :\, H^1(X,\, TX(-D))\, \longrightarrow\, H^1(X,\, {\mathcal O}_X)\otimes W^*$$
is injective.
\end{lemma}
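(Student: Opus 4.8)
The plan is to dualize, exactly as the case $g\,=\,1$ was reduced to \eqref{j2}. Serre duality gives canonical isomorphisms $H^1(X,\, TX(-D))^*\,=\, H^0(X,\, K_X^{\otimes 2}(D))$ and $H^1(X,\, \mathcal{O}_X)^*\,=\, H^0(X,\, K_X)$, under which the transpose of the map $F_W$ in \eqref{q1} is the multiplication map
\[
\mu_W\,:\, H^0(X,\, K_X)\otimes W\, \longrightarrow\, H^0(X,\, K_X^{\otimes 2}(D)),\qquad s\otimes w\,\longmapsto\, s\cdot w.
\]
Thus $F_W$ is injective if and only if $\mu_W$ is surjective, and the assertion becomes the existence of a subspace $W\, \subset\, H^0(X,\, K_X(D))$ with $\dim W\,=\, d+2$ for which $\mu_W$ is onto. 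Note that $\dim H^0(X,\, K_X(D))\,=\, g+d-1\, \geq\, d+2$ precisely because $g\, \geq\, 3$, so there is room for such a $W$.

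Next I would record that the full multiplication map, the case $W\,=\, H^0(X,\, K_X(D))$, is surjective. Since $X$ is non--hyperelliptic, $K_X$ is very ample, and the kernel bundle $M_{K_X}$, defined by $0\,\to\, M_{K_X}\,\to\, H^0(X,\,K_X)\otimes\mathcal{O}_X\,\to\, K_X\,\to\, 0$, is a stable bundle of slope $-2$ (a standard property of the canonical kernel bundle on a non--hyperelliptic curve). Twisting this sequence by $K_X(D)$ and using $H^1(X,\, K_X(D))\,=\,0$ (as $\deg K_X(D)\,=\,2g-2+d\,>\,2g-2$), the cokernel of the full multiplication map is identified with $H^1(X,\, M_{K_X}\otimes K_X(D))\,=\, H^0(X,\, M_{K_X}^*(-D))^*$. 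A nonzero section of $M_{K_X}^*(-D)$ would produce a sub--line--bundle of the stable bundle $M_{K_X}^*$ of degree at least $d\,\geq\,2$, contradicting that $M_{K_X}^*$ has slope $2$; hence $H^0(X,\, M_{K_X}^*(-D))\,=\,0$ and the full map is onto. This is reassuring but not, by itself, the lemma.

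The substance is to shrink $H^0(X,\, K_X(D))$ to a subspace of the small dimension $d+2$; here I would adapt Lazarsfeld's argument from Theorem~1.1 of \cite{Gi}. First choose a base--point--free pencil $A\,\subset\, H^0(X,\, K_X(D))$, available since $\deg K_X(D)\,\geq\, 2g$. The base--point--free pencil trick applied to $0\,\to\, TX(-D)\,\to\,\mathcal{O}_X^{\oplus 2}\,\to\, K_X(D)\,\to\,0$ shows that $A\otimes H^0(X,\,K_X)\,\to\, H^0(X,\, K_X^{\otimes 2}(D))$ is injective with cokernel of dimension $g-3+d$, lying in $H^1(X,\,\mathcal{O}_X(-D))\,=\, H^0(X,\, K_X(D))^*$. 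It then remains to adjoin $d$ further sections $w_1,\, \ldots,\, w_d$ to $A$, producing a $(d+2)$--dimensional $W$, so that the images $w_i\cdot H^0(X,\, K_X)$ together span this residual cokernel; equivalently, in the notation of \eqref{q1}, so that $\bigcap_{w\in W}\ker(w_*)\,=\,0$, where $w_*\,:\, H^1(X,\, TX(-D))\,\to\, H^1(X,\,\mathcal{O}_X)$ is contraction with $w$.

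The main obstacle is exactly this final extraction. A crude dimension count does not suffice, because the subspaces one must avoid, of the shape $W^{\perp}\otimes H^1(X,\,\mathcal{O}_X)$ inside $H^0(X,\, K_X(D))^*\otimes H^1(X,\,\mathcal{O}_X)$, are decomposable and hence far from generic, so the required transversality has to be forced by the geometry of $X$ rather than by counting. This is where very ampleness of $K_X$ enters decisively: because $K_X$ separates points and tangent directions, in particular at the points of $D$, the $w_i$ can be chosen so that the divisors cut out by general members of $W$ are in sufficiently general position, and Lazarsfeld's argument then forces the common kernel above to collapse. This step is precisely what requires $g\,\geq\, 3$ and $X$ non--hyperelliptic; the complementary low--genus case $g\,=\,2$, which is necessarily hyperelliptic, is treated separately in Lemma \ref{lem5}.
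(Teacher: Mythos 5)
Your first two steps are sound: the Serre-duality reduction to surjectivity of $\mu_W$ is exactly the right move, and your proof that the \emph{full} multiplication map $H^0(X,K_X)\otimes H^0(X,K_X(D))\to H^0(X,K_X^{\otimes 2}(D))$ is surjective (via stability of the kernel bundle $M_{K_X}$ for non--hyperelliptic $X$) is a valid alternative to the paper's route, which instead invokes Green's Theorem (4.e.1) after checking $h^1(X,\mathcal{O}_X(D))\le g-2$ by Clifford's theorem and Riemann--Roch. But the heart of the lemma is the extraction of a $(d+2)$--dimensional $W$, and there your proposal has a genuine gap, one you yourself flag as ``the main obstacle'': after setting up the base-point-free pencil $A\subset H^0(X,K_X(D))$ and identifying a residual cokernel of dimension $g+d-3$, you need $d$ further sections whose products with $H^0(X,K_X)$ span that cokernel, and for this you offer only an appeal to very ampleness of $K_X$, ``sufficiently general position,'' and an unspecified adaptation of ``Lazarsfeld's argument.'' No mechanism is given, and none is obvious: the residual space mixes the genus and divisor directions, so there is no ready-made theorem to quote, and the transversality you need is precisely what would have to be proved.

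The paper avoids this difficulty by a different, and cleaner, decomposition: it takes $W=W_0\oplus U$, where $W_0\subset H^0(X,K_X)$ is a \emph{general $3$-dimensional subspace of the canonical system} (not a pencil in $H^0(X,K_X(D))$), for which \cite[Theorem 1.1]{Gi} --- the Lazarsfeld result you gesture at --- directly gives surjectivity of $H^0(X,K_X)\otimes W_0\to H^0(X,K_X^{\otimes 2})$, and $U\subset H^0(X,K_X(D))$ is any $(d-1)$-dimensional complement of $H^0(X,K_X)$, i.e.\ $U\cap H^0(X,K_X)=\{0\}$. Surjectivity of the full map $J$, pushed through the quotient diagram, shows that $H^0(X,K_X)\otimes U$ surjects onto $H^0(X,K_X^{\otimes 2}(D))/H^0(X,K_X^{\otimes 2})$, and the two pieces together give surjectivity of $F_W^*$. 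The point is that the target is split as $H^0(X,K_X^{\otimes 2})$ plus its quotient, so the only nontrivial general-position input is the already-known canonical-bundle statement of Gieseker--Lazarsfeld, and the divisor part is handled by pure linear algebra. To repair your proof you would either have to carry out the spanning argument you left open, or switch to this decomposition.
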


\begin{proof}
For a compact Riemann surface $X$ of genus $g$, and an effective divisor $D$ on $X$ of
degree $d$, denote the holomorphic line bundle
$K^{\otimes 2}_X\otimes {\mathcal O}_X(D))$ by $K^2_X(D)$.
For any subspace $V\, \subset\, H^0(X,\, K_X(D))$, let
$$
F^*_V\, :\, H^0(X,\, K_X)\otimes V \, \longrightarrow\, K^0(X,\, K^2_X(D))
$$
be the dual of the homomorphism $F_V$ in \eqref{q1}.

We need to show that there is a $W$ with $\dim W\,=\,d+2$ such that the above homomorphism
\begin{equation}\label{q4}
F^*_W\, :\, H^0(X,\, K_X)\otimes W\, \longrightarrow\, H^0(X,\, K^2_X(D))
\end{equation}
is surjective.

Consider the natural homomorphism
\begin{equation}\label{q5}
J\, :\, H^0(X,\, K_X)\otimes H^0(X,\, K_X(D))\, \longrightarrow\, H^0(X,\, K^2_X(D))\, .
\end{equation}

We will now show that under our assumptions, the homomorphism $J$ in \eqref{q5} is surjective. To this end, we 
apply \cite[Theorem (4.e.1)]{Gr} and see that it suffices to prove that
\begin{equation}\label{hv}
h^1(X,{\mathcal O}_X(D))\,\le\, g - 2\, .
\end{equation}

When $D$ is non--special, \eqref{hv} evidently holds. So
we suppose that $D$ is special. In order to prove \eqref{hv}, first assume that $d \,\geq\, 4$.
Then Clifford's theorem (see \cite[p.~251]{GH}) says that
$h^0(X,\,{\mathcal O}_X(D)) \,\le\, d/2 +1$. Now using Riemann-Roch theorem we get that
$$d + 1 - g \,=\, h^0(X,\,{\mathcal O}_X(D)) - h^1(X,\, {\mathcal O}_X(D)) \,\le\,
\frac{d}{2} + 1 - h^1(X,\,{\mathcal O}_X(D))\, .$$
This implies that \eqref{hv} holds, and hence $J$ is surjective in this case by \cite[Theorem (4.e.1)]{Gr}.

Assume now that $d\,=\,2$ or $d\,=\,3$. Since $X$ is not hyperelliptic, if $d\,=\,2$, then we have $h^0(X, \,{\mathcal 
O}_X(D))\,=\, 1$. If $d\,=\,3$, Clifford's theorem implies that $h^0(X,\,{\mathcal O}_X(D))\,\le \, 2$. Then the 
Riemann-Roch theorem implies that \eqref{hv} holds in both these cases. Applying \cite[Theorem (4.e.1)]{Gr}, we 
infer that $J$ is surjective in these cases as well.

Consequently, we have obtained the surjectivity of the map $J$ in \eqref{q5} for any pair $(X,\,D)$ as in the lemma.
 
{}From the commutative diagram
\[
\xymatrix
{
 H^0(X,\,K_X)\otimes H^0(X,\,K_X(D))\ar[r]^{J} \ar[d]&H^0(X,\,K_X^2(D)) \ar[d]\\
H^0(X,\,K_X)\otimes H^0(X,\,K_X(D))/H^0(X,\,K_X)\ar[r]&H^0(X,\,K_X^2(D))/H^0(X,\,K_X^2)
}
\]
we notice that the surjectivity of $J$ implies the surjectivity of the map
\[
H^0(X,\,K_X)\otimes (H^0(X,\,K_X(D))/H^0(X,\,K_X))\,\longrightarrow\, H^0(X,\,K_X^2(D))/H^0(X,\,K_X^2)\,.
\]

Consider $U\,\subset\, H^0(X,\,K_X(D))$ of dimension $(d-1)$ such that $U\cap H^0(X,\,K_X)\,=\,
\{0\}$ inside $H^0(X,\,K_X(D))$. Then the map
\[
U\,\longrightarrow\, H^0(X,\,K_X(D))/H^0(X,\,K_X)
\]
is an isomorphism and hence the induced map
\begin{equation}\label{eqn:U}
H^0(X,\,K_X)\otimes U\,\longrightarrow\, H^0(X,\,K_X^2(D))/H^0(X,\,K_X^2)
\end{equation}
is surjective.

On the other hand, since $X$ is non--hyperelliptic, \cite[Theorem 1.1]{Gi} (whose proof is attributed to
Lazarsfeld) shows that for a general subspace $W_0\,\subset\, H^0(X,\,K_X)$ of dimension 3 the multiplication map
\begin{equation}\label{eqn:W0}
H^0(X,\,K_X)\otimes W_0\,\longrightarrow\, H^0(X,\,K_X^2)
\end{equation}
is surjective.
Set $$W\,=\,W_0\oplus U\,\subset\, H^0(X,\,K_X(D))\, .$$ The surjectivity of the maps in \eqref{eqn:U})
and \eqref{eqn:W0} implies the surjectivity of
\[
F^*W:H^0(X,\,K_X)\otimes W\,\longrightarrow\, H^0(X,\,K_X^2(D))
\]
which concludes the proof.
\end{proof}

\begin{remark}
Lemma \ref{lem4} is optimal in the following sense. If $W\subset H^0(X,\,K_X(D))$ is a subspace such that the 
intersection $W\cap H^0(X,\,K_X)$ inside $H^0(X,\,K_X(D))$ is at least three--dimensional and $H^0(X,\,K_X)\otimes W
\,\longrightarrow\, H^0(X,\,K_X^2(D))$ is surjective, then $\dim W\,\ge\, d+2$. This claim is easily
obtained by reverting the argument in the proof of Lemma \ref{lem4}.
\end{remark} 
 
Lemma \ref{lem4} excluded the case of $g\,=\,2$.
This is dealt with separately below. 
 
\begin{lemma}\label{lem5}
Let $X$ be a compact connected Riemann surface of genus two, and let $D$
be an effective divisor of degree $d\,>\,1$ such that $D\not\,\in\, |K_X|$. Then the multiplication map
\[
H^0(X,\,K_X)\otimes H^0(X,\, K_X(D))\,\longrightarrow\, H^0(X,\,K_X^2(D))
\]
is surjective.
\end{lemma}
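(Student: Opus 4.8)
The plan is to mirror the structure of Lemma \ref{lem4}, using the cohomological criterion of Green for surjectivity of a multiplication map, while handling carefully the features special to genus two---namely, that every genus two curve is hyperelliptic, so the separate argument invoking non-hyperellipticity (via \cite[Theorem 1.1]{Gi}) is unavailable. The key tool remains \cite[Theorem (4.e.1)]{Gr}, which reduces surjectivity of the multiplication map $H^0(X,\,K_X)\otimes H^0(X,\,K_X(D))\,\longrightarrow\, H^0(X,\,K_X^2(D))$ to a bound on $h^1(X,\,{\mathcal O}_X(D))$. For $g\,=\,2$ the relevant inequality would read $h^1(X,\,{\mathcal O}_X(D))\,\le\, g-2\,=\,0$, i.e.\ one would need $D$ to be non-special. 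This is exactly where the hypothesis $D\,\not\in\,|K_X|$ enters, together with the constraint $d\,>\,1$.

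First I would run the dimension count. For genus two, $\deg K_X\,=\,2$ and $h^0(X,\,K_X)\,=\,2$. By Riemann--Roch, $h^0(X,\,{\mathcal O}_X(D))-h^1(X,\,{\mathcal O}_X(D))\,=\,d-1$, and by Serre duality $h^1(X,\,{\mathcal O}_X(D))\,=\,h^0(X,\,K_X(-D))$. Since $\deg(K_X(-D))\,=\,2-d$, this vanishes automatically as soon as $d\,>\,2$, so $D$ is non-special for all $d\,\ge\,3$ and Green's criterion applies immediately. The only genuinely delicate value is $d\,=\,2$: here $\deg(K_X(-D))\,=\,0$, so $h^0(X,\,K_X(-D))\,=\,1$ precisely when $K_X(-D)\,\cong\,{\mathcal O}_X$, that is, when $D\,\in\,|K_X|$, and $h^0(X,\,K_X(-D))\,=\,0$ otherwise. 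The hypothesis $D\,\not\in\,|K_X|$ rules out the bad case, forcing $h^1(X,\,{\mathcal O}_X(D))\,=\,0$. Thus in every permitted case $D$ is non-special, and \cite[Theorem (4.e.1)]{Gr} directly yields surjectivity of the multiplication map.

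The step I expect to require the most care is the justification that Green's theorem applies in this low-genus regime, since the hypothesis $h^1(X,\,{\mathcal O}_X(D))\,\le\,g-2$ degenerates to the sharp condition $h^1\,=\,0$ when $g\,=\,2$; there is no slack, so the non-speciality of $D$ must be established exactly, not merely estimated via Clifford's theorem as in the higher-genus argument of Lemma \ref{lem4}. I would therefore verify the vanishing $h^0(X,\,K_X(-D))\,=\,0$ case-by-case in $d$, emphasizing that the single excluded effective canonical divisor in degree two is precisely the locus $D\,\in\,|K_X|$. Once non-speciality is secured, the conclusion is a direct citation of \cite[Theorem (4.e.1)]{Gr}, and no further construction of an auxiliary splitting $W\,=\,W_0\oplus U$ (as needed in the non-hyperelliptic argument) is required, because here the full space $H^0(X,\,K_X(D))$ already works.
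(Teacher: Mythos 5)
Your proposal is correct. The hypothesis verification is exactly right: by Serre duality $h^1(X,\mathcal{O}_X(D))\,=\,h^0(X,K_X(-D))$, which vanishes for $d\,>\,2$ by degree reasons and for $d\,=\,2$ precisely because $D\,\not\in\,|K_X|$ excludes $K_X(-D)\,\cong\,\mathcal{O}_X$; so $D$ is non-special in all permitted cases, and this is the same vanishing the paper uses. The only difference from the paper is in how that vanishing is converted into surjectivity. You invoke Green's Theorem (4.e.1) with $W\,=\,H^0(X,K_X)$, $\dim W\,=\,2$, where the bound $h^1(X,\mathcal{O}_X(D))\,\le\,\dim W-2\,=\,0$ has no slack; the paper instead runs the argument by hand, via the evaluation (base-point-free pencil) sequence
\[
0\,\longrightarrow\, TX\,\longrightarrow\, H^0(X,K_X)\otimes\mathcal{O}_X\,\longrightarrow\, K_X\,\longrightarrow\,0\,,
\]
twisted by $K_X(D)$, whose long exact cohomology sequence exhibits $H^1(X,\mathcal{O}_X(D))$ as the obstruction to surjectivity. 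These are the same mechanism: for a pencil, Green's $H^0$-lemma \emph{is} the base-point-free pencil trick, proved by exactly the sequence above (note $TX\,=\,K_X^{-1}$ and the canonical pencil is base-point free on a genus-two curve). What the paper's route buys is self-containedness and the avoidance of any worry about whether (4.e.1) carries hidden restrictions in this degenerate regime ($g\,=\,2$, $r\,=\,1$); what your route buys is uniformity with the proof of Lemma \ref{lem4}, making clear that the genus-two case is the boundary case $h^1\,=\,g-2$ of the same criterion rather than a genuinely separate argument.
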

 
\begin{proof}
We start with the short exact sequence
\[
0\,\longrightarrow\, TX\,\longrightarrow\, H^0(X,\,K_X)\otimes \mathcal{O}_X\,\longrightarrow\, K_X
\,\longrightarrow\, 0\, ,
\]
twist it by $K_X(D)$ and take the corresponding long exact sequence of cohomologies
$$
H^0(X,\,K_X)\otimes H^0(X,\, K_X(D)) \,\longrightarrow\, H^0(X,\,K_X^2(D)) \,\longrightarrow\,
H^1(X,\,\mathcal{O}_X(D)) \,\longrightarrow\,\, .
$$
By the hypothesis, we have $H^1(X,\,\mathcal{O}_X(D))\,=\, 0$ and hence from this exact
sequence of cohomologies it follows that the multiplication map is surjective.
\end{proof}
 
\begin{remark}\label{rem3}
Note that, under the hypotheses of Lemma \ref{lem5}, the Riemann-Roch theorem implies that $h^0(X,
\,K_X(D))\,=\,d+1\, \,<\, d+2$.
\end{remark}

\section*{Acknowledgement}

We thank the referee for a comment on irreducibility of logarithmic connections that led to a clarification. We 
thank Frank Loray for an useful discussion. We thank Michiaki Inaba for pointing out \cite{Ko}. MA was partly 
supported by the CNCS - UEFISCDI project PN-III-P4-ID-PCE- 2020-0029. IB is partially supported by a J. C. Bose 
Fellowship. IB and SD was partially supported by the French government through the UCAJEDI Investments in the 
Future project managed by the National Research Agency (ANR) with the reference number ANR2152IDEX201. SH is 
supported by the DFG grant HE 6829/3-1 of the DFG priority program SPP 2026 {\em Geometry at 
Infinity}.

%%%%%%%%%%%%%%%%%%%%%%%%%%%%%%%%%%%%%%%%%%%%%%%%%%%%%%%%%%%%%%%%%

\end{document}